\def\scfig #1 #2 {\resizebox{#2}{!}{\includegraphics{#1}}}
\numberwithin{equation}{section}
\def\degree{\operatorname{degree}}
\def\wt{\operatorname{wt}}
\def\Q{\mathbb Q}
\def\N{\mathbb N}
\newcommand{\lm}{{\lambda}}
\theoremstyle{plain}
\newtheorem{theorem}[equation]{Theorem}
\newtheorem{corollary}[equation]{Corollary}
\newtheorem{proposition}[equation]{Proposition}
\newtheorem{lemma}[equation]{Lemma}
\theoremstyle{definition}
\newtheorem{example}[equation]{Example}
\newtheorem{remark}[equation]{Remark}
\newtheorem{discussion}[equation]{Discussion}
\newenvironment{discussionbox}[1][]{%
    \begin{discussion}[#1]\pushQED{\qed}}{\popQED \end{discussion}}
\newcommand{\bc}{\mathbb{C}}
\begin{document}
 \title[Rational homotopy of maps between complex Grassmann manifolds]{Rational homotopy of maps between certain complex Grassmann manifolds}

\author[Prateep Chakraborty \and  Shreedevi K. Masuti]{Prateep Chakraborty* \and  Shreedevi K. Masuti**}
\newcommand{\acr}{\newline\indent}
\address {\llap{*\,} Stat-Math Unit \acr Indian Statistical Institute \acr 8th Mile Mysore Road \acr Bangalore 560 059 \acr India.}
\email{chakraborty.prateep@gmail.com}
\address{\llap{**\,}Institute of Mathematical Sciences \acr 
IV Cross Road, CIT Campus \acr 
Taramani \acr  
Chennai 600113 \acr 
India.}
\email{shreedevikm@imsc.res.in, masuti.shree@gmail.com}

\thanks{The second author thanks Indian Statistical Institute Bangalore for providing local hospitality during the course of writing this paper. She also thanks Department of Atomic Energy, Government of India, for providing funding for her post doctoral studies during which this work is done.}
\subjclass[2010]{Primary 55S37, 13A02; Secondary 57T15}
\keywords{Grassmann manifold, homotopy class of maps, graded algebra homomorphism, cohomology algebra}

\begin{abstract}
 Let $G_{n,k}$ denote the complex Grassmann manifold of $k$-dimensional vector subspaces of $\mathbb{C}^n$. Assume $l,k\le \lfloor n/2\rfloor$. We show that, for sufficiently large $n$, any continuous map $h:G_{n,l}\to G_{n,k}$ is rationally null homotopic if $(i)~ 1\le k< l,$ $(ii)~2<l<k< 2(l-1)$, $(iii)~1<l<k$, $l$ divides $n$ but $l$ does not divide $k$. 
 \end{abstract}
\maketitle

\section{Introduction}
An important problem in homotopy theory is to classify, up to homotopy, maps between two given topological spaces.
This is often a difficult problem even when the spaces involved are  well-behaved 
spaces such as simply connected compact smooth manifolds.   The determination of the set of {\it rational} homotopy classes 
of maps between such spaces is more tractable, thanks to the work of Sullivan \cite{sullivan}.   In the special 
case when the spaces involved are {\it rationally formal} in the sense of rational homotopy theory 
this problem is reduced to studying the graded algebra homomorphisms between their rational (singular) cohomology 
algebras. See \cite[p.156]{FHT}.  

Denote by $G_{n,k}$ the complex Grassmann manifold of $k$-dimensional vector subspaces of $\bc^n$.  
It can be identified with the homogeneous space $U(n)/(U(k)\times U(n-k))$ where $U(n)$ denotes the group of $n\times n$ unitary matrices.   In particular it is a simply connected Hermitian symmetric space of complex dimension $k(n-k)$.  Thus $G_{n,k}$ 
is a rationally formal space \cite[p.162]{FHT}.    Our aim in this paper is to establish the following result.  Since $G_{n,k}\cong G_{n,n-k}$, we assume that $1\le k\le \lfloor n/2\rfloor$.

\begin{theorem}\label{main theorem}  Let $l,k\le \lfloor n/2\rfloor$. 
(i) Suppose that $n\ge 2l^2+l-2,~ 1\le k< l$.  Then any continuous map $h:G_{n,l}\to G_{n,k}$ is rationally null 
homotopic.  \\
(ii) Suppose that $n\ge 3k^2-2$.  Then any continuous map $h:G_{n,l}\to G_{n,k}$ is rationally null homotopic 
in any of the following cases: (a)  $2<l<k< 2(l-1)$, (b) $1<l<k$, $l$ divides $n$ but $l$ does not divide $k$. 
\end{theorem}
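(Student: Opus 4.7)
By the rational formality of complex Grassmannians \cite[p.~162]{FHT} and the Sullivan correspondence \cite[p.~156]{FHT}, rational homotopy classes of maps $G_{n,l}\to G_{n,k}$ are in bijection with graded $\mathbb Q$-algebra homomorphisms $\phi\colon H^*(G_{n,k};\mathbb Q)\to H^*(G_{n,l};\mathbb Q)$, and $h$ is rationally null-homotopic precisely when $\phi=h^*$ vanishes on positive-degree elements. Using the Borel presentation $H^*(G_{n,k};\mathbb Q)\cong \mathbb Q[c_1,\ldots,c_k]/(\bar c_{n-k+1},\ldots,\bar c_n)$, where $c_i\in H^{2i}$ is the $i$-th Chern class of the tautological subbundle and $\bar c_j$ is the degree-$j$ homogeneous component of $(1+c_1+\cdots+c_k)^{-1}$, and writing $d_1,\ldots,d_l$ for the analogous target generators, the problem reduces to showing that any choice of $C_i:=\phi(c_i)\in H^{2i}(G_{n,l};\mathbb Q)$ satisfying $\bar c_j(C_1,\ldots,C_k)=0$ in $H^{2j}(G_{n,l};\mathbb Q)$ for $j=n-k+1,\ldots,n$ must have $C_1=\cdots=C_k=0$.

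The plan is to show $C_i=0$ by induction on $i$, expanding $C_i=\sum_\lambda a^{(i)}_\lambda\sigma_\lambda$ in the Schubert basis and extracting suitable Schubert-class coefficients from the relations $\bar c_j(C)=0$. For the base case $i=1$, write $C_1=a\,d_1\in H^2(G_{n,l};\mathbb Q)$. In part (i) ($k<l$), the source vanishing $c_1^{k(n-k)+1}=0$ (above the top nonzero degree $2k(n-k)$) pulls back to $a^{k(n-k)+1}d_1^{k(n-k)+1}=0$; the inequality $k(n-k)+1\le l(n-l)$ (from $k<l\le\lfloor n/2\rfloor$) together with Hard Lefschetz (since $d_1$ is proportional to the K\"ahler class of the Pl\"ucker embedding) forces $a=0$. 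In part (ii) ($k>l$), this argument fails because $d_1^{k(n-k)+1}$ already vanishes in the target, so instead examine the relation $\bar c_{n-k+1}(C)=0$ and extract the coefficient of the single-row Schubert class $\sigma_{(n-k+1)}$ (nonzero in the target because $l<k$ gives $n-k+1\le n-l$). By the Littlewood--Richardson rule, only products of row Schur polynomials contribute to a row Schubert class, so this coefficient equals the scalar evaluation $\bar c_{n-k+1}\bigl(a^{(1)}_{(1)},\ldots,a^{(k)}_{(k)}\bigr)$. Together with the analogous identities from $\bar c_j(C)=0$ for $j=n-k+1,\ldots,n-l$ and with further equations extracted from the coefficients of other Schubert classes (enabled by the lower bound $n\ge 3k^2-2$), these force $a^{(1)}_{(1)}=a=0$.

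The inductive step, once $C_1=\cdots=C_{i-1}=0$, uses the surviving relations $\bar c_j(0,\ldots,0,C_i,\ldots,C_k)=0$; expanding in the Schubert basis and applying Littlewood--Richardson produces a polynomial system on the coefficients $a^{(i')}_\lambda$ ($i'\ge i$) with only the trivial solution, using the lower bounds on $n$ to keep enough Schubert classes in the relevant target degrees linearly independent. For part (ii)(a) ($2<l<k<2(l-1)$), the hypothesis $k<2(l-1)$ controls the sizes of cross-products $C_iC_{i'}$ appearing in $\bar c_j(C)$, keeping the induction within reach through $i=k$; for part (ii)(b) ($l\mid n$, $l\nmid k$), the divisibility condition creates an integer obstruction on Pieri-rule coefficients involving $n/l$ and $k$ (e.g., in expansions of $\sigma_{(j)}^{n/l}$-type products) that rules out nontrivial $a^{(i)}_\lambda$. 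The main obstacle is the explicit combinatorial bookkeeping in these Schubert expansions: the polynomial $\bar c_j(C_1,\ldots,C_k)$ contains many monomials, each expanding via repeated Littlewood--Richardson multiplications into many Schubert classes, and for each induction step and each case of the theorem one must identify a specific collection of Schubert-class coefficients whose vanishings reduce to a tractable, forcing-trivial polynomial system on the $a^{(i)}_\lambda$. The precise lower bounds $n\ge 2l^2+l-2$ and $n\ge 3k^2-2$ are calibrated exactly so that enough Schubert classes in the relevant target degrees remain independent to carry the induction all the way through $i=k$.
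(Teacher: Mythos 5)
Your reduction to graded algebra homomorphisms via formality and the Sullivan correspondence is exactly the paper's first step (its Theorem \ref{rigid}), and your Hard Lefschetz argument for $C_1=0$ in case (i) is correct as far as it goes. But the heart of the theorem --- forcing \emph{all} of $C_1,\dots,C_k$ to vanish --- is left as an unexecuted plan. You acknowledge that the ``explicit combinatorial bookkeeping'' of the Schubert expansions is the main obstacle, and that is precisely the content of the theorem: without identifying a structural reason why the resulting polynomial system has only the trivial solution, the argument is a program, not a proof. This matters because the system genuinely \emph{does} have nontrivial solutions in nearby cases (the paper's Example \ref{nontrivial map} exhibits nontrivial homomorphisms $H^*(G_{n,k};\Q)\to H^*(G_{n,1};\Q)$ when $k\mid n$, and Example \ref{sury} gives more for $l=1$, $k=2$), so any successful argument must locate exactly where hypotheses (a) and (b) enter; your gestures toward ``controlling cross-products'' and an ``integer obstruction on Pieri coefficients'' do not do this.

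The paper's route is structurally different and worth internalizing. Rather than proving $C_i=0$ directly, it first proves that each $\phi(R_i)$ (the image of a source relation, which is a polynomial in $y_1,\dots,y_k$ only when $k<l$, or constrained by the hypothesis $(\phi(x_1),\dots,\phi(x_k))\subseteq(y_1,\dots,y_{l-1})$ when $k>l$) must be the zero polynomial, by establishing a structural property of the target ideal: every nonzero homogeneous element of $\mathcal I_{n,l}$ of degree at most $2n$ contains a monomial divisible by $y_{l-1}y_l$ (Proposition \ref{existence of monomials} and Corollary \ref{existence of xl}, proved by comparing coefficients of monomials $y_{l-1}^{m_1}y_l^{m_2}$ whose existence is guaranteed by the numerical Proposition \ref{proposition of gh} --- this is where the bounds $n\ge 2l^2+l-2$ and $n\ge 3k^2-2$ are actually used). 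Once $\phi(R_i)=0$ for all $i$, the nilpotency of the generators $x_i$ in $H^*(G_{n,k};\Q)$ immediately gives $\phi(x_i)=0$ (Lemma \ref{vanishing of Ri}); no induction on $i$ is needed. Case (ii)(b) is then handled by projecting to $\Q[y_l]$ and reducing to a homomorphism between smaller polynomial rings that must annihilate an entire ideal $\mathcal I_{e_1-1,e}$, which is where $l\mid n$, $l\nmid k$ enters (as the dichotomy $f>f_1$ in Theorem \ref{2nd theorem part2}). You should either supply the missing Schubert-calculus lemmas that play the role of Corollary \ref{existence of xl}, or adopt the paper's strategy of killing the relations first.
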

 As a consequence we see that the set $[G_{n,l},G_{n,k}]$ of homotopy classes of maps between the 
indicated Grassmann manifolds is finite when $n,k,l$ are as in the above theorem. The first part of the theorem implies that for a fixed $l$ and $k<l$, with at most {\it finitely} many exceptions the set $[G_{n,l},G_{n,k}]$ is finite. We shall in fact establish a stronger statement than Case $(ii)(b)$ of Theorem \ref{main theorem} covering many more cases. See Remark \ref{other result} for the detailed discussion of this paragraph.

We now recall briefly the history of the problem of classifying maps between complex Grassmann manifolds. 
It was shown by Friedlander \cite{friedlander} that the complex Grassmann manifolds admit self-maps 
of arbitrarily high degree. 
The classification of endomorphisms of the cohomology algebras of the complex Grassmann manifolds has been studied in \cite{oniel}, 
\cite{brewster-homer}, \cite{gh}, \cite{hoffman}, with applications to fixed point property.  See also \cite{hoffman-homer} in which self-maps of certain complex flag manifolds are considered.  
Study  
of continuous maps  via their induced homomorphisms in mod $2$ cohomology algebras between {distinct} real Grassmann manifolds was initiated in \cite{ks}.   In \cite{rs}, degrees of maps between distinct oriented real Grassmann manifolds of the {\it same} dimension was determined.  The same problem for distinct complex Grassmann manifolds was considered in \cite{ss}.  
We point out that,  Paranjape and Srinivas \cite{ps} showed that any non-constant {\it algebraic morphism} between two complex Grassmann manifolds of 
the same dimension 
is an isomorphism provided that the target is {\it not} the complex projective space.  

One could consider the more general problem of classifying continuous maps $G_{m,l}\to G_{n,k}$. Since $G_{n,k}$ is a complex flag manifold; this problem of classification, up to rational homotopy, boils down to classifying graded algebra homomorphisms between the cohomology algebras with rational coefficients $H^*(G_{n,k};\Q)$ and $H^*(G_{m,l};\Q)$ (see Theorem \ref{rigid}).
Since $H^*(G_{m-1,l};\Q)$ is a quotient of $H^*(G_{m,l};\Q)$, the vanishing of any graded algebra homomorphism from $H^*(G_{n,k};\mathbb Q)$ to $H^*(G_{n,l};\mathbb Q)$ in positive degrees when $m>n$ does imply the same conclusion for graded algebra homomorphisms from $ H^*(G_{n,k};\mathbb Q)$ to $H^*(G_{m,l};\mathbb Q)$. 
Partial results when $k<l$ and $m-l>n-k$ were obtained in \cite{cs}.      
The nature of the structure of the 
rational cohomology algebra of the complex Grassmann manifolds still makes the problem quite challenging, 
and perhaps explains why the problem still remains unsolved for general values of $m,n,k,l$.

\section{Preliminaries}\label{Preli}
Let $\gamma_{n,k}$ be the ``tautological bundle'' over $G_{n,k}$, whose fiber over a point $V\in G_{n,k}$ is the $k$- dimensional complex vector space $V$.  This vector bundle $\gamma_{n,k}$ is a rank $k$-subbundle of the rank $n$ trivial bundle $\varepsilon^n$.  Let $\gamma_{n,k}^\bot$ denote the orthogonal complement of $\gamma_{n,k}$ in $\varepsilon^n$ (with respect to a hermitian metric on $\mathbb{C}^n$).  Let $x_i:=c_i(\gamma_{n,k})\in H^{2i}(G_{n,k};\mathbb{Z})$ be the i-th Chern class of $\gamma_{n,k}$, $1\leq i\leq k$.  Denoting the total Chern class of a vector bundle $\eta$ by $c(\eta)$ we see that $c(\gamma_{n,k})c(\gamma_{n,k}^\bot)=1$.  We write $c=1+x_1+\dots+x_k$.\\
From \cite{borel} and \cite[Theorem 2.1]{hoffman}, we know that the cohomology algebra of $G_{n,k}$ is 
$$H^*(G_{n,k};\Q)=\frac{\Q[x_1,\dots,x_k]}{\mathcal{I}_{n,k}},$$
where $\degree x_i=2i$ and $\mathcal{I}_{n,k}$ is the ideal generated by $(c^{-1})_{n-k+j}$, $1\leq j\leq k$. Here $(c^{-1})_j$ denotes the homogeneous part of the formal inverse of $c$ having (total) degree $2j$.
The ideal $\mathcal{I}_{n,k}$ contains all the elements $(c^{-1})_j$, $j\geq n-k+1$ and 
$$c^{-1}=1+(c^{-1})_1+(c^{-1})_2+\dots+(c^{-1})_{n-k}\in H^*(G_{n,k};\Q)$$
is in fact the total Chern class of $\gamma_{n,k}^\bot$.\\

We denote $(c^{-1})_{n-k+j},$ $1\leq j\leq k,$ by $R_j$.
For ${\bf n}=(n_1,\ldots,n_k) \in \mathbb N^k$, we set 
$|{\bf n}|=n_1+\cdots+n_k, \wt {\bf n}=\sum_{i=1}^kin_i$, ${\bf x}^{\bf n}=x_1^{n_1}\cdots x_k^{n_k}$ 
and $c_{\bf n}=\frac{|{\bf n}|!}{n_1!n_2!\ldots n_k!}$. By \cite[p.174]{gh}, we can write $R_j$ in the following form
\begin{eqnarray}\label{expression of R_i}
 R_j=\sum_{{\bf n}\in\mathbb{N}^k,{\wt {\bf n}}=n-k+j}(-1)^{|{\bf n}|}c_{\bf n}{\bf x}^{\bf n}.
\end{eqnarray}
We shall use Expression (\ref{expression of R_i}) of $R_j$ in our proofs. We recall the following well-known facts about the cohomology algebra of $G_{n,k}$, which will be used in the proofs of our results: \\

\noindent (1) The homogeneous polynomials $R_1,\ldots,R_k \in \mathcal I_{n,k}$ form a {\it regular sequence} in the polynomial algebra $\Q[x_1,x_2,\dots,x_k]$, which means that $(R_1,\ldots,R_k) \neq \Q[x_1,x_2,\dots,x_k]$, $R_1\neq0$ and $R_{j+1}$ is not a zero-divisor in $\frac{\Q[x_1,x_2,\dots,x_k]}{(R_1,R_2,\dots,R_j)}$, for $1\leq j\leq k-1$.  See \cite{bott-tu}.\\

\noindent (2) The natural imbeddings $\hat{i}:G_{n,k}\hookrightarrow G_{n+1,k}$ and $\hat{j}:G_{n,k}\hookrightarrow G_{n+1,k+1}$ induce surjections $\hat{i}^*:H^*(G_{n+1,k};\Q)\to H^*(G_{n,k};\Q)$ and $\hat{j}^*:H^*(G_{n+1,k+1};\Q) \to H^*(G_{n,k};\Q)$, where $\hat{i}^*(x_r+\mathcal I_{n+1,k})=x_r+\mathcal I_{n,k}=\hat{j}^*(x_r+\mathcal I_{n+1,k+1})$ for $1\leq r\leq k$ and $\hat{j}^*(x_{k+1}+\mathcal I_{n+1,k+1})=0$.  The homomorphism $\hat{i}^*$ induces isomorphisms in cohomology in dimensions up to $2(n-k)$ and $\hat{j}^*$ induces isomorphisms in cohomology in dimensions up to $2k$.

Next we shall prove a lemma, which we will need to prove the main results of this paper. Throughout this paper, by saying ``a graded algebra homomorphism is trivial'', we shall mean that it vanishes in positive degrees.
\begin{lemma}\label{vanishing of Ri}
Let $n,l,k\geq1$ and $\mathcal{I}_{n,k}=(R_1,\ldots,R_k)$. Let $\phi: \Q[x_1,\ldots,x_k] \rightarrow \Q[y_1,\ldots,y_l] $ be a graded algebra homomorphism 
such that $\phi(R_i)=0$ for all $i=1,\ldots,k.$ Then $\phi$ is trivial.
\end{lemma}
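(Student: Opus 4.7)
\bigskip

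\noindent\textbf{Proof plan.}
The natural strategy is to show that every positive-degree generator $x_i$ is sent to $0$ by $\phi$. Since $\Q[y_1,\ldots,y_l]$ is an integral domain, it suffices to exhibit an exponent $N_i$ with $\phi(x_i)^{N_i}=0$. This will follow once we know $x_i^{N_i}\in\mathcal I_{n,k}$ for some $N_i\geq 1$, for then applying $\phi$ gives $\phi(x_i)^{N_i}=\sum a_j \phi(R_j)=0$.

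The key input is that the quotient algebra $\Q[x_1,\ldots,x_k]/\mathcal I_{n,k}$ is finite-dimensional over $\Q$. This is immediate from the identification with $H^*(G_{n,k};\Q)$ recalled in Section \ref{Preli}, and can also be seen purely algebraically: fact (1) asserts that $R_1,\ldots,R_k$ is a regular sequence of $k$ homogeneous elements of positive degree in the polynomial ring in $k$ variables, forcing the graded quotient to be Artinian, hence finite-dimensional. Because this $\Q$-vector space is finite-dimensional and graded, the homogeneous element $x_i$ of positive degree must be nilpotent modulo $\mathcal I_{n,k}$; equivalently, some power $x_i^{N_i}$ lies in $\mathcal I_{n,k}=(R_1,\ldots,R_k)$.

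Putting the two pieces together, write $x_i^{N_i}=\sum_{j=1}^k f_{ij}R_j$ in $\Q[x_1,\ldots,x_k]$ and apply $\phi$ to obtain
\[
\phi(x_i)^{N_i}=\sum_{j=1}^k \phi(f_{ij})\,\phi(R_j)=0.
\]
Since $\Q[y_1,\ldots,y_l]$ has no nonzero nilpotents, $\phi(x_i)=0$ for every $i=1,\ldots,k$. The algebra $\Q[x_1,\ldots,x_k]$ is generated in positive degrees by $x_1,\ldots,x_k$, so $\phi$ vanishes on every positive-degree element, i.e.\ $\phi$ is trivial in the sense fixed just before the lemma.

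The only point requiring care is the nilpotence assertion, which is really the crux of the argument; it hinges on the finite-dimensionality of $H^*(G_{n,k};\Q)$ (equivalently, on the regular-sequence property of length $k$ in a ring of Krull dimension $k$). Once that is available, the remainder is a one-line consequence of $\Q[y_1,\ldots,y_l]$ being a domain, and no hypothesis on $n$, $k$, or $l$ is needed.
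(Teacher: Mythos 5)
Your proposal is correct and is essentially the paper's own argument: finite-dimensionality of $H^*(G_{n,k};\Q)$ gives $x_i^{N_i}\in\mathcal I_{n,k}$, applying $\phi$ kills the power since $\phi(R_j)=0$, and the absence of nilpotents in $\Q[y_1,\ldots,y_l]$ forces $\phi(x_i)=0$. The only difference is that you spell out the justification of finite-dimensionality (via the regular sequence) more explicitly than the paper does.
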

\begin{proof}
Since $H^*(G_{n,k};\Q)$ is finite dimensional as $\Q$-vector space, there exists an integer $N_i$ such that $x_i^{N_i} \in \mathcal I_{n,k} $ for all $i$, $1\leq i\leq k.$ Hence $({\phi}(x_i))^{N_i}=\phi(x_i^{N_i})=0$ which implies that ${\phi}(x_i)=0$ 
for all $i=1,\ldots, k.$ Hence $\phi$ is trivial.
\end{proof}

We recall the following proposition from \cite{gh} which will be used frequently in this paper.

\begin{proposition} \cite[Proposition 1]{gh} \label{proposition of gh}
 Let $N,m_0,n_0$ be integers such that 
 $$N \geq p(p-1)+n_0p+m_0(p-1).$$
 Then there exist integers $m\geq m_0 $ and $n\geq n_0$ such that 
 $$m(p-1)+np=N.$$
\end{proposition}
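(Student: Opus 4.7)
The plan is to treat this as a constrained linear Diophantine problem in the variables $m$ and $n$. Since $\gcd(p-1,p)=1$, Bezout's identity guarantees that the equation $m(p-1)+np=N$ has an integer solution $(m^*,n^*)$ for every $N\in\ZZ$; the full solution set is the translate of $(m^*,n^*)$ by the free rank-one submodule generated by $(p,-(p-1))$. So the only question is whether this one-parameter family meets the quadrant $\{m\geq m_0,\ n\geq n_0\}$ under the stated lower bound on $N$.

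First I would fix any particular integer solution $(m^*,n^*)$ and then shift by the integer parameter $k$ so that $m:=m^*+kp$ is the smallest integer with $m\geq m_0$; this forces $m_0\leq m\leq m_0+p-1$, and the corresponding $n:=n^*-k(p-1)=(N-m(p-1))/p$ is automatically an integer. The only way this choice can fail is to have $n<n_0$, which is exactly where the size hypothesis on $N$ enters.

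The verification is then a one-line estimate. Since $m\leq m_0+(p-1)$,
\[
np\ =\ N-m(p-1)\ \geq\ N-m_0(p-1)-(p-1)^2.
\]
Invoking the hypothesis $N\geq p(p-1)+n_0p+m_0(p-1)$ together with the identity $p(p-1)=(p-1)^2+(p-1)$ gives $np\geq n_0p+(p-1)\geq n_0p$, and hence $n\geq n_0$, as required.

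No step is expected to present a real obstacle; the whole argument is bookkeeping for a linear Diophantine equation with coprime coefficients. The only quantitative subtlety is that the constant $p(p-1)$ in the hypothesis is calibrated precisely to absorb the worst-case shift $m-m_0\leq p-1$ while still leaving $n\geq n_0$. In fact $(p-1)^2$ would already suffice, but the slightly looser bound $p(p-1)$ is presumably the one tailored for the regular-sequence computations with the relations $R_j$ appearing in the subsequent sections.
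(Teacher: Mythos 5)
Your argument is correct and complete: choosing the unique representative $m$ of the solution class in $[m_0,m_0+p-1]$ and checking $np\geq N-m_0(p-1)-(p-1)^2\geq n_0p+(p-1)$ is exactly the right bookkeeping, and your side remark that $(p-1)^2$ already suffices in place of $p(p-1)$ is also accurate. Note, however, that the paper itself gives no proof of this statement --- it is quoted verbatim from Glover--Homer \cite[Proposition 1]{gh} --- so there is nothing internal to compare against; your proof is the standard coprime linear Diophantine argument and is the natural (and essentially the original) one, modulo the implicit assumption $p\geq 1$ which is always satisfied in the paper's applications.
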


\section{Graded algebra homomorphisms from $H^*(G_{n,k};\Q)$ to $H^*(G_{n,l};\Q)$ for $k <l$}
In this section we consider the possible graded algebra homomorphisms from $H^*(G_{n,k};\Q)$ to $H^*(G_{n,l};\Q)$ where $k <l$.  
We prove that any such graded algebra homomorphism is trivial for sufficiently large $n$ (Theorem \ref{1st theorem}). 
In order to prove this we show that $x_{l-1}x_l$ occurs in
every nonzero homogeneous polynomial of degree at most $2n$ in $\mathcal I_{n,l}$ for sufficiently large $n$ (Corollary \ref{existence of xl}). The restriction on $n$ is needed in the course of proving this result.

We also prove that if $P_i,1\leq i\leq l,$ is a nonzero homogeneous polynomial in 
 $\mathcal I_{n,l}$ of degree $2(n-l+i)$ then $P_i$ contains $x_{i-1},\ldots,x_l$ for 
 $n \geq 2l^2+l-2$. To obtain these results, we first prove the following proposition. We denote the vector 
 $(0,\ldots,1,\ldots,0)\in \mathbb N^l,$ where $1$ occurs at the $i$th position and $0$ elsewhere, by ${\bf e_i}.$ 
 Let $1\leq c_1,c_2,l$ be integers. For $i=1,\ldots,l$ and ${\bf s}\in Q(i),$ we set 
 \begin{eqnarray*}
 Q(i)&:=&\{{\bf n}\in \mathbb N^l |1\leq \wt {\bf n}\leq i-1\},\\
 D_i(c_1,c_2,n,l)&:=&\{x_{l-1}^{m_1}x_l^{m_2}: m_1 \geq c_1,   m_2 \geq c_2 
 \mbox{ satisfying }(l-1)m_1+lm_2=(n-l+i)\} \mbox{ and }\\
D_{i,{\bf s}}(c_1,c_2,n,l)&:=&\{{\bf x}^{\bf s}x_{l-1}^{m_1}x_l^{m_2}:
 m_1 \geq c_1, m_2 \geq c_2   
 \mbox{ satisfying }(l-1)m_1+lm_2=(n-l+i)-\wt {\bf s}\}.
 \end{eqnarray*}
We shall use $D_i$ instead of $D_i(c_1,c_2,n,l)$ and $D_{i,{\bf s}}$ instead of $D_{i,{\bf s}}(c_1,c_2,n,l)$ to simplify notations.
 \begin{proposition}\label{existence of monomials}
 Let $1\leq l,c_1,c_2 $ be integers such that $n-l+1\geq l(l-1)+(c_1+l)(l-1)+c_2l.$ Let $1 \leq i \leq l$ be fixed. Then 
 \begin{enumerate}
 \item \label{existence of monomials part1} the sets $D_i$ and $D_{i,{\bf s}}$ are non-empty;
 \item \label{existence of monomials part2} every nonzero homogeneous polynomial 
of degree $2(n-l+i)$ in $\mathcal I_{n,l}$ contains at least one monomial from the set 
$$\varLambda_i:=D_i \bigcup\Big(\bigcup_{{\bf s} \in Q(i)} D_{i, {\bf s}}\Big)$$
 with nonzero coefficient.
 \end{enumerate}
\end{proposition}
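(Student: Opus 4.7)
For Part (1), I apply Proposition \ref{proposition of gh} with $p = l$, $m_0 = c_1$, and $n_0 = c_2$: it produces integers $m_1 \ge c_1$ and $m_2 \ge c_2$ satisfying $(l-1)m_1 + l m_2 = N$ whenever $N \ge l(l-1) + c_1(l-1) + c_2 l$. For $D_i$ I take $N = n-l+i$, and for $D_{i,{\bf s}}$ I take $N = n-l+i - \wt{\bf s}$; both are at least $n-l+1$ (since $\wt{\bf s} \le i-1$ for ${\bf s} \in Q(i)$), and the hypothesis $n-l+1 \ge l(l-1) + (c_1+l)(l-1) + c_2 l$ comfortably exceeds the threshold required by Proposition \ref{proposition of gh}, so the required pair $(m_1, m_2)$ exists in each case.

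For Part (2), let $P \neq 0$ be homogeneous of degree $2(n-l+i)$ in $\mathcal I_{n,l}$. Since $\deg R_j = 2(n-l+j) > \deg P$ for $j > i$, I fix a representation $P = \sum_{j=1}^{i} f_j R_j$ with $f_j \in \Q[x_1,\ldots,x_l]$ homogeneous of degree $2(i-j)$; in particular $f_i$ is a scalar. Expanding $f_j = \sum_{\wt{\bf s} = i-j} a_{j,{\bf s}} {\bf x}^{\bf s}$ and using (\ref{expression of R_i}), the coefficient in $P$ of a monomial $M = {\bf x}^{\bf m}$ of weight $n-l+i$ takes the form
\begin{eqnarray*}
[M]P = \sum a_{j,{\bf s}}\,(-1)^{|{\bf n}|} c_{\bf n},
\end{eqnarray*}
where the sum ranges over triples $(j,{\bf s},{\bf n})$ with ${\bf s}+{\bf n}={\bf m}$, $\wt{\bf n}=n-l+j$, $\wt{\bf s}=i-j$. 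The plan is to suppose for contradiction that $[M]P = 0$ for every $M \in \varLambda_i$ and to derive $P = 0$. Let $j_0$ be the largest index with $f_{j_0} \not\equiv 0$ and let ${\bf x}^{{\bf s}_0}$ be a monomial appearing in $f_{j_0}$; absorbing any $x_{l-1}, x_l$ factors of ${\bf s}_0$ into the choice of $(m_1^*, m_2^*)$ below, I may assume ${\bf s}_0$ is supported in $\{1,\ldots,l-2\}$. Applying Part (1) at weight $n-l+j_0$ supplies $m_1^* \ge c_1$ and $m_2^* \ge c_2$, and the candidate monomial $M^* := {\bf x}^{{\bf s}_0} x_{l-1}^{m_1^*} x_l^{m_2^*}$ lies in $D_{i,{\bf s}_0}$ (or in $D_i$ when ${\bf s}_0 = {\bf 0}$, which forces $j_0 = i$).

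The main obstacle is that $[M^*]P$ receives contributions from many triples: for every pair $(s_{l-1}, s_l)$ with $(l-1)s_{l-1} + l s_l \le i - 1$ one has a contribution indexed by $a_{j,\, {\bf s}_0 + s_{l-1}{\bf e}_{l-1} + s_l{\bf e}_l}$ (with $j$ determined by weight), so these contributions could in principle cancel. The ``$+l$'' buffer in the hypothesis $n-l+1 \ge l(l-1) + (c_1+l)(l-1) + c_2 l$, stronger than what Part (1) alone requires, is precisely the slack that lets me vary $(m_1^*, m_2^*)$ over a family of valid Part (1)-solutions at weight $n-l+j_0$. The resulting linear system $\{[M^*]P = 0\}$ in the unknowns $a_{j,{\bf s}}$ has matrix of multinomial coefficients $\binom{m_1^* + m_2^* - s_{l-1} - s_l}{m_1^* - s_{l-1}}$, and its Vandermonde-style nondegeneracy in the $(m_1^*, m_2^*)$ parameters will force $a_{j_0, {\bf s}_0} = 0$, contradicting the choice of ${\bf s}_0$. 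Establishing this nondegeneracy, together with a careful induction on $\wt{\bf s}_0$ to handle the non-pure case cleanly, is the chief combinatorial step.
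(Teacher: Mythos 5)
Part (1) of your proposal is fine and matches the paper. Part (2), however, is not a proof: you set up the correct decomposition $P=\sum_j f_jR_j$ with $f_j$ homogeneous of degree $2(i-j)$ and the right strategy (produce a monomial ${\bf x}^{{\bf s}_0}x_{l-1}^{m_1^*}x_l^{m_2^*}\in\varLambda_i$ and compare coefficients), but you then defer the decisive step --- showing that the resulting linear relations force $a_{j_0,{\bf s}_0}=0$ --- to an unestablished ``Vandermonde-style nondegeneracy'' over a family of choices of $(m_1^*,m_2^*)$, which you yourself label ``the chief combinatorial step.'' As written, nothing rules out cancellation, so the argument is incomplete at exactly the point where the content of the proposition lies.

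Moreover, the cancellation you fear is largely illusory, and seeing why is what makes the paper's proof work without any Vandermonde machinery. For ${\bf s}\in Q(i)$ one has $\wt{\bf s}\le i-1\le l-1$, so $\max\{j:x_j\mid{\bf x}^{\bf s}\}\le i-1$; in particular, when $i<l$ no ${\bf x}^{\bf s}$ involves $x_{l-1}$ or $x_l$ at all. Hence the coefficient of ${\bf x}^{\bf s}x_{l-1}^{m}x_l^{m'}$ in $\sum_{\bf t}N_{i,{\bf t}}{\bf x}^{\bf t}R_{i-\wt{\bf t}}$ receives contributions only from ${\bf t}\le{\bf s}$ componentwise: the system is \emph{triangular}. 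The paper orders $Q(i)$ lexicographically and inducts from the smallest element ${\bf e}_{i-1}$ upward, so each comparison of coefficients isolates a single unknown multiplied by a nonzero multinomial coefficient $\pm c_{\bf n}$ --- one equation, one unknown, no determinant to analyze. (Your choice of the \emph{largest} $j_0$ could be made to work for $i<l$ by the same divisibility observation, but you did not carry it out.) The only genuine coupling occurs when $i=l$, where ${\bf s}={\bf e}_{l-1}$ is supported at position $l-1$ and the pure monomial $x_{l-1}^{m}x_l^{m'}$ receives contributions from both $N_lR_l$ and $N_{l,{\bf e}_{l-1}}x_{l-1}R_1$. The paper resolves this $2\times 2$ system not by varying parameters over a large family but by comparing the coefficients of the two specific monomials $x_{l-1}^{m_7}x_l^{m_8}$ and $x_{l-1}^{m_7-l}x_l^{m_8+l-1}$ (same weight, shifted by the relation $(l-1)\cdot l=l\cdot(l-1)$) and checking directly that $\tfrac{m_7+m_8}{m_7}\neq\tfrac{m_7+m_8-1}{m_7-l}$; this is also where the extra ``$+l$'' slack in the hypothesis is actually used, namely to guarantee $m_7-l\ge c_1\ge 1$. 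You would need to supply both the triangularity observation and this explicit two-equation computation to close the gap.
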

\begin{proof}
 \eqref{existence of monomials part1}: Follows from Proposition \ref{proposition of gh}.  

 \eqref{existence of monomials part2}:
Let $I=\mathcal I_{n,l}.$ For fixed $1 \leq i \leq l,$ let $0\neq P_i \in I$ be a homogeneous polynomial of degree $2(n-l+i).$ Then 
\begin{eqnarray} \label{expression for Pi}
 P_i=N_iR_i+\sum_{{\bf s} \in Q(i)} N_{i,{\bf s}} {\bf x}^{\bf s}R_{i-\wt {\bf s}}
\end{eqnarray}
for some $N_i,N_{i,{\bf s}} \in \Q$.  
Suppose $P_i$ does not contain any monomial from the set $\varLambda_i.$ \\
\noindent \underline{Case 1:} $i<l$\\
Since $n-l+i\geq n-l+1\geq l(l-1)+(c_1+l)(l-1)+c_2l,$ 
by Proposition \ref{proposition of gh}, there exist integers $m_1\geq c_1+l,m_2 \geq c_2$ such that 
$$n-l+i=m_1(l-1)+m_2l.$$ 
Observe that $\max\{j:x_j \mbox{ divides } {\bf x}^{\bf s}\mbox{ for some }{\bf s}\in Q(i)\}=i-1$. 
Therefore comparing the coefficients of $x_{l-1}^{m_1}x_l^{m_2}$ in 
(\ref{expression for Pi}), we get that $N_i=0$. \\
In order to show that $N_{i,{\bf s}}=0$ for all ${\bf s} \in Q(i)$ we order the set $Q(i)$ 
lexicographically and induct on ${\bf s}$. This means that ${\bf n}< {\bf m}$ in $Q(i)$ if the 
first nonzero entry of ${\bf m}-{\bf n}$ is positive. This defines a total order on $Q(i)$. 
Note that the vector ${\bf e_{i-1}} \in Q(i)$ is the smallest element in $Q(i)$. 
First we show that $N_{i,{\bf e_{i-1}}}=0$. Since $n-l+1 \geq  l(l-1)+(c_1+l)(l-1)+c_2l$, again 
by Proposition \ref{proposition of gh}, there exist integers $m_3\geq c_1+l,m_4 \geq c_2$ such that 
$$n-l+1=m_3(l-1)+m_4l.$$
Comparing the coefficients of $x_{i-1}x_{l-1}^{m_3}x_l^{m_4}$ in (\ref{expression for Pi}), 
we get that $N_{i,{\bf e_{i-1}}}=0$. \\
Let ${\bf s} \in Q(i)$ be fixed and assume that $N_{i,{\bf t}}=0$ for ${\bf t}<{\bf s}$. 
Since $n-l+i-\wt {\bf s} \geq n-l+1 \geq  l(l-1)+(c_1+l)(l-1)+c_2l$, by Proposition \ref{proposition of gh}, 
there exist integers $m_5\geq c_1+l,m_6 \geq c_2$ such that
$$n-l+i-\wt {\bf s}=m_5(l-1)+m_6l.$$
Then the monomial ${\bf x}^{\bf s}x_{l-1}^{m_5}x_l^{m_6}$ occurs in ${\bf x}^{\bf t}R_{i-\wt {\bf t}}$ implies that ${\bf t} \leq {\bf s}$. Hence comparing the coefficients of ${\bf x}^{\bf s}x_{l-1}^{m_5}x_l^{m_6}$ in (\ref{expression for Pi}) and using the induction 
hypothesis we get that $N_{i,{\bf s}}=0$. Therefore $N_{i,{\bf s}}=0$ for all ${\bf s} \in Q(i)$ 
which implies that $P_i=0$, a contradiction. 
Hence $P_i $ must contain at least one monomial from the set $\varLambda_i$.

\noindent \underline{Case 2 :} $i=l$ \\
We have
\begin{eqnarray} \label{expression for Pl}
 P_l=N_lR_l+\sum_{{\bf s} \in Q(l)} N_{l,{\bf s}} {\bf x}^{\bf s}R_{l-\wt {\bf s}}.
\end{eqnarray} 
Since $n \geq n-l+1\geq l(l-1)+(c_1+l)(l-1)+c_2l,$ 
by Proposition \ref{proposition of gh}, there exist integers $m_7\geq c_1+l,m_8 \geq c_2$ such that
$$n=m_7(l-1)+m_8l.$$
Comparing the coefficients of $x_{l-1}^{m_7}x_l^{m_8}$ in Equation (\ref{expression for Pl}), we get that
\begin{eqnarray*}
 (-1)^{m_7+m_8} \frac{(m_7+m_8)!}{m_7!~m_8!}N_{l}+(-1)^{m_7+m_8-1}\frac{(m_7+m_8-1)!}{(m_7-1)!m_8!}
 N_{l,{\bf e_{l-1}}}=0.
 \end{eqnarray*}
Hence 
\begin{eqnarray} \label{equation1}
 \frac{m_7+m_8}{m_7}N_l-N_{l,{\bf e_{l-1}}}=0.
\end{eqnarray}
Since $n=(m_7-l)(l-1)+(m_8+l-1)l$, comparing the coefficients of $x_{l-1}^{m_7-l}x_l^{m_8+l-1}$ in Equation (\ref{expression for Pl}), we get that 
\begin{eqnarray} \label{equation2}
 \frac{m_7+m_8-1}{m_7-l} N_l-N_{l,{\bf e_{l-1}}}=0.
\end{eqnarray}
Comparing Equations (\ref{equation1}) and (\ref{equation2}), we get that $N_l=0=N_{l,{\bf e_{l-1}}}$. \\
To show that $N_{l,{\bf s}}=0$ for all ${\bf s} \in Q(l)$, order the set $Q(l)$ 
lexicographically and induct on ${\bf s}$. The same argument, as in Case 1, shows that $N_{l,{\bf s}}=0$ for all ${\bf s}\in Q(l)$.
Therefore $P_l$ must contain a monomial 
from the set $\varLambda_l.$
\end{proof}
Taking $c_1=c_2=1$ in Proposition \ref{existence of monomials}, we get
\begin{corollary} \label{existence of xl}
 Every nonzero homogeneous polynomial of degree at most $2n$ in $\mathcal I_{n,l}$ 
contains a monomial, with nonzero coefficient, that is divisible by $x_{l-1}x_l$ if $n \geq 2l^2+l-2.$
 \end{corollary}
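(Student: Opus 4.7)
The plan is to derive this as a direct specialization of Proposition~\ref{existence of monomials} with $c_1=c_2=1$. The first step is to verify the numerical hypothesis: with $c_1=c_2=1$, the condition $n-l+1 \geq l(l-1)+(c_1+l)(l-1)+c_2 l$ simplifies to $n-l+1\geq 2l^2-1$, i.e.\ $n\geq 2l^2+l-2$, which is exactly what is assumed in the corollary.

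Next I would address the range of degrees. Since $\mathcal I_{n,l}=(R_1,\ldots,R_l)$ with $\deg R_i=2(n-l+i)$, every nonzero homogeneous element of $\mathcal I_{n,l}$ of degree at most $2n$ must in fact have degree exactly $2(n-l+i)$ for some $1\leq i\leq l$, since in strictly smaller degrees the ideal is zero. Applying Proposition~\ref{existence of monomials} to each such $i$ would then guarantee that the given polynomial contains, with nonzero coefficient, at least one monomial from $\varLambda_i=D_i\cup\bigl(\bigcup_{{\bf s}\in Q(i)}D_{i,{\bf s}}\bigr)$.

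The final step, which is also the reason the specialization $c_1=c_2=1$ is the natural choice, is to observe that every monomial in $\varLambda_i$ is divisible by $x_{l-1}x_l$: a monomial in $D_i$ has the form $x_{l-1}^{m_1}x_l^{m_2}$ with $m_1,m_2\geq 1$, and a monomial in $D_{i,{\bf s}}$ has the form ${\bf x}^{\bf s}x_{l-1}^{m_1}x_l^{m_2}$ with $m_1,m_2\geq 1$. In either case the monomial is divisible by $x_{l-1}x_l$, which is what we need.

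Since all the combinatorial content (non-emptiness of $D_i$ and $D_{i,{\bf s}}$ via Proposition~\ref{proposition of gh}, and the lexicographic induction forcing a monomial from $\varLambda_i$ to appear with nonzero coefficient) has already been handled inside Proposition~\ref{existence of monomials}, there is no real obstacle here beyond the arithmetic check $l(l-1)+(1+l)(l-1)+l=2l^2-1$ and the observation about divisibility by $x_{l-1}x_l$.
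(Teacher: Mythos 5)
Your proposal is correct and is exactly the paper's argument: the paper proves the corollary by the single line ``Taking $c_1=c_2=1$ in Proposition~\ref{existence of monomials}'', and your write-up just makes explicit the arithmetic check $l(l-1)+(1+l)(l-1)+l=2l^2-1$, the observation that a nonzero homogeneous element of $\mathcal I_{n,l}$ of degree at most $2n$ has degree $2(n-l+i)$ for some $1\le i\le l$, and the fact that with $c_1=c_2=1$ every monomial of $\varLambda_i$ is divisible by $x_{l-1}x_l$. No gaps.
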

 
As a consequence we prove the following theorem. This theorem can be viewed as a general property of the cohomology algebra of a complex Grassmann manifold. 
\begin{theorem}\label{interesting}
For fixed $1 \leq i \leq l$, let $P_i \in \mathcal I_{n,l}$ be a nonzero homogeneous polynomial of degree $2(n-l+i)$. Assume that $n \geq 2l^2+l-2$. 
Then, for $k=i-1,\ldots,l$, $P_i$ contains a monomial ${\bf x}^{\bf n}$ (${\bf n}$ depending on $k$), 
with nonzero coefficient, that is divisible by $x_k.$ 
\end{theorem}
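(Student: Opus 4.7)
The plan is to split according to $k$. For $k=l-1$ and $k=l$, Corollary \ref{existence of xl} applies directly, since $\deg P_i = 2(n-l+i) \leq 2n$; the monomial it produces is divisible by $x_{l-1}x_l$, hence by $x_k$. For $k \in \{i-1, i, \ldots, l-2\}$, I would proceed by contradiction: assume $P_i$ has no monomial divisible by $x_k$, and use the expression $P_i = N_i R_i + \sum_{{\bf s} \in Q(i)} N_{i,{\bf s}} {\bf x}^{\bf s} R_{i-\wt {\bf s}}$ from (\ref{expression for Pi}) to show, by induction on ${\bf s}$ in the lex order on $Q(i)$, that all the scalars $N_i$ and $N_{i,{\bf s}}$ vanish, forcing $P_i = 0$.

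The base step, namely $N_i = 0$, splits according to whether $k \geq i$ or $k = i-1$. When $k \geq i$, the inequality $k > i-1 \geq \wt {\bf s}$ for every ${\bf s} \in Q(i)$ gives $s_k = 0$; combined with $s_{l-1} = s_l = 0$ (valid for $i < l$), this means no ${\bf s} \neq 0$ in $Q(i)$ has ${\bf x}^{\bf s}$ dividing the monomial $x_k x_{l-1}^{m_1} x_l^{m_2}$. Choosing $(m_1, m_2)$ nonnegative with $k + (l-1) m_1 + l m_2 = n-l+i$ via Proposition \ref{proposition of gh}, the coefficient of $x_k x_{l-1}^{m_1} x_l^{m_2}$ in $P_i$ therefore equals $N_i$ times an explicit nonzero rational read off from (\ref{expression of R_i}), and the contradiction hypothesis forces $N_i = 0$. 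When $k = i-1$, the vector ${\bf e}_{i-1} \in Q(i)$ also contributes, giving a single relation $N_{i,{\bf e}_{i-1}} = N_i(1 + m_1 + m_2)$; to separate the two unknowns I would invoke Proposition \ref{proposition of gh} with $m_0 = l$, $n_0 = 0$ to obtain two valid pairs $(m_1, m_2)$ and $(m_1-l, m_2+l-1)$ with different sums, and the resulting pair of equations forces $N_i = N_{i,{\bf e}_{i-1}} = 0$.

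Once the base is established, the inductive step uses the target monomial $x_k {\bf x}^{\bf s} x_{l-1}^{m_1} x_l^{m_2}$, or ${\bf x}^{\bf s} x_{l-1}^{m_1} x_l^{m_2}$ when $s_{i-1} \geq 1$ in the $k=i-1$ case, with $(m_1, m_2) \geq 0$ chosen so that the degree matches (again by Proposition \ref{proposition of gh}). A support analysis shows that only ${\bf t} \in Q(i) \cup \{0\}$ with ${\bf t} \leq {\bf s}$ component-wise (and hence $\leq {\bf s}$ in the lex order) can contribute, so by the induction hypothesis every term below the top vanishes and the coefficient collapses to $N_{i,{\bf s}}$ times a nonzero scalar. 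The contradiction hypothesis then forces $N_{i,{\bf s}} = 0$, completing the cascade and yielding $P_i = 0$, the desired contradiction.

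The main obstacle is the sub-case $k = i-1$, where a single coefficient equation cannot disentangle $N_i$ from $N_{i,{\bf e}_{i-1}}$ and two equations with distinct values of $m_1 + m_2$ are needed. A secondary but essential technical point throughout is the support analysis at each cascade step, which must ensure that every ${\bf t} \in Q(i)$ whose monomial divides the chosen target is lex-dominated by ${\bf s}$, so that the induction hypothesis annihilates all contributions below the top. The hypothesis $n \geq 2l^2+l-2$ is used through Corollary \ref{existence of xl} for the first regime and is comfortably strong enough for every application of Proposition \ref{proposition of gh} required by the cascade.
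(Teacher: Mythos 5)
Your argument is correct, but it takes a genuinely different route from the paper's. The paper proves Theorem \ref{interesting} by induction on $l$: it applies the quotient map $f$ killing $x_l$ (geometrically $\hat{j}^*:H^*(G_{n,l};\Q)\to H^*(G_{n-1,l-1};\Q)$), uses the regular-sequence property of $R_1^\prime,\ldots,R_{l-1}^\prime$ to show $f(P_i)\neq 0$ when $i<l$, invokes the inductive hypothesis inside $\mathcal I_{n-1,l-1}$ to produce monomials divisible by $x_k$ for $k=i-1,\ldots,l-1$, and disposes of $k=l-1,l$ and of the case $i=l$ via Corollary \ref{existence of xl}. You instead rerun the coefficient-comparison cascade from the proof of Proposition \ref{existence of monomials} with targets $x_k x_{l-1}^{m_1}x_l^{m_2}$ in place of $x_{l-1}^{m_1}x_l^{m_2}$; your two-equation device for disentangling $N_i$ from $N_{i,{\bf e_{i-1}}}$ when $k=i-1$ is exactly the trick the paper uses in Case 2 of that proposition. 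Your version is more computational but self-contained (it even exhibits explicit monomials and needs no regular-sequence input beyond the decomposition (\ref{expression for Pi})); the paper's is shorter and leans on Corollary \ref{existence of xl} and the geometry of the embedding $G_{n-1,l-1}\hookrightarrow G_{n,l}$. One small imprecision to repair: in the cascade step with $k=i-1$ and $s_{i-1}=0$, the vector ${\bf t}={\bf e_{i-1}}$ divides your target $x_{i-1}{\bf x}^{\bf s}x_{l-1}^{m_1}x_l^{m_2}$ without being componentwise $\leq{\bf s}$, so your blanket claim that only componentwise-dominated ${\bf t}$ contribute is not literally true there; it is harmless because ${\bf e_{i-1}}$ is the lex-minimum of $Q(i)$ and its coefficient is already known to vanish, but the support analysis should conclude ``lex $\le {\bf s}$'' rather than ``componentwise $\le {\bf s}$'' in that sub-case.
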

\begin{proof}
 We induct on $l$. For $l=1,2$ the result follows from Corollary \ref{existence of xl}. 
 Assume that for any $m\geq 2(l-1)^2+(l-1)-2$, if $P_j \in \mathcal I_{m,l-1},$ $1\leq j\leq l-1$, is a nonzero homogeneous polynomial 
 of degree $2(m-(l-1)+j)$ then $P_j$ contains a monomial ${\bf x}^{\bf n}$ (${\bf n}$ depending on $k$), with nonzero coefficient, that is divisible by $x_k,$ where $k=j-1,\ldots,l-1$. Consider the graded algebra homomorphism 
$f:\Q[x_1,\ldots,x_l] \rightarrow \Q[x_1,\ldots,x_{l-1}]$ defined by 
$f(x_i)=x_i$ for $i=1,\ldots,l-1$ and $f(x_l)=0$. This induces a graded algebra homomorphism
$$\bar f:\frac{\Q[x_1,\ldots,x_l]}{(R_1,\ldots,R_l)} = H^*(G_{n,l};\Q) \longrightarrow H^*(G_{n-1,l-1};\Q)= \frac{\Q[x_1,\ldots,x_{l-1}]}{(R_1^\prime,\ldots,R_{l-1}^\prime)}.$$
(This $\bar f=\hat{j}^*$ as mentioned in Fact(2) concerning the cohomology algebra of Grassmann manifold.)   
By Corollary \ref{existence of xl}, $P_l$ contains a monomial ${\bf x}^{\bf n}$, with nonzero coefficient, that is divisible by $x_{l-1}x_l$. Hence we may assume that $i<l$. 
Then $f(P_i) \in (R_1^\prime,\ldots,R_{l-1}^\prime)$ is a homogeneous polynomial of degree 
 $2(n-1-(l-1)+i)\leq 2(n-1)$. We claim that $f(P_i)\neq0$. Suppose that $f(P_i)=0$. Since $0\neq P_i\in \mathcal I_{n,l}$ is of degree $2(n-l+i)$, $P_i=Q_1R_1+\cdots+Q_iR_i$ for some homogeneous  polynomials $Q_1,\dots,Q_i \in \Q[x_1,\dots,x_l]$ with at least one $Q_r\neq 0.$ Since $\degree Q_j = 2(i-j)<2l $ for all $1 \leq j \leq i,$ $Q_1,\dots,Q_i \in \Q[x_1,\dots,x_{l-1}].$ 
 Let $s=\max\{j:Q_j \neq 0\}.$ Applying $f$ on $P_i$, we get $0=f(P_i)=f(Q_1)f(R_1)+\cdots+f(Q_i)f(R_i)=
 Q_1R_1^\prime+\cdots+Q_sR_s^\prime.$ Since $Q_s\neq 0,$ $Q_s \notin 
 (R_1^\prime,\ldots,R_{s-1}^\prime)$ (due to degree reason). This contradicts the fact that $R_1^\prime,\ldots,R_s^\prime$ form a regular sequence in $\Q[x_1,\dots,x_{l-1}].$ Hence $f(P_i)\neq0$.\\
 Since $n \geq 2l^2+l-2$, $n-1 \geq  2(l-1)^2+(l-1)-2$. Hence by induction hypothesis for each $k=i-1,\ldots,l-1$, $f(P_i)$ contains a monomial ${\bf x}^{\bf n}$ (${\bf n}$ depending on $k$), with nonzero coefficient, that is divisible by $x_k.$ Thus $P_i$ contains a monomial ${\bf x}^{\bf n}$, with nonzero coefficient, that is divisible by $x_k$, for $k=i-1,\ldots,l-1.$ 
Therefore using Corollary \ref{existence of xl}, the result follows.
 \end{proof}
 
 Next we prove the main theorem of this section.
 \begin{theorem} \label{1st theorem}
Let $1\leq k<l$ and $\bar\phi:H^*(G_{n,k};\Q)\to H^*(G_{n,l};\Q)$ be a graded algebra homomorphism.  Then $\bar\phi$ is trivial if $n\geq 2l^2+l-2$. 
\end{theorem}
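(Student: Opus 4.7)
The plan is to lift $\bar\phi$ to a graded algebra homomorphism $\phi:\Q[x_1,\ldots,x_k]\to\Q[y_1,\ldots,y_l]$ between polynomial rings (using distinct variable names to match the setup of Lemma \ref{vanishing of Ri}) and then invoke that lemma. Thus it suffices to show $\phi(R_i)=0$ in $\Q[y_1,\ldots,y_l]$ for every $i=1,\ldots,k$; after that, Lemma \ref{vanishing of Ri} immediately gives $\phi(x_j)=0$ for each $j$ and hence $\bar\phi$ vanishes in positive degrees.

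To establish $\phi(R_i)=0$ I would combine a support observation with Corollary \ref{existence of xl}. First, because $k<l$, we have $\deg \phi(x_j)=2j\leq 2k<2l=\deg y_l$, so any monomial occurring in $\phi(x_j)$ has $y_l$-exponent zero; consequently $\phi(x_j)\in\Q[y_1,\ldots,y_{l-1}]$, and therefore $\phi(R_i)$, being a polynomial expression in the $\phi(x_j)$'s, also lies in $\Q[y_1,\ldots,y_{l-1}]$. In particular, no monomial of $\phi(R_i)$ is divisible by $y_l$, and a fortiori none is divisible by $y_{l-1}y_l$.

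On the other hand, the identity $\bar\phi(R_i+\mathcal I_{n,k})=0$ forces $\phi(R_i)\in\mathcal I_{n,l}$, and its degree is $2(n-k+i)\leq 2n$ because $i\leq k$. Thus $\phi(R_i)$ is a homogeneous element of $\mathcal I_{n,l}$ of degree at most $2n$, and since $n\geq 2l^2+l-2$, Corollary \ref{existence of xl} applies: were $\phi(R_i)$ nonzero, it would have to contain a monomial divisible by $y_{l-1}y_l$, contradicting the previous paragraph. Hence $\phi(R_i)=0$, and the proof is complete.

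The bulk of the technical effort has already been absorbed into Corollary \ref{existence of xl} (via Proposition \ref{existence of monomials}); the theorem itself is a clean application. The only delicate point is the dual role of the inequality $k<l$: it simultaneously ensures that $y_l$ cannot appear in any $\phi(x_j)$ (ruling out the obstruction monomial $y_{l-1}y_l$) and that $\deg R_i=2(n-k+i)\leq 2n$ (placing $\phi(R_i)$ inside the degree window of the corollary). Losing either feature would break the argument, which is why the companion cases $l<k$ in the main theorem require the substantially finer analysis developed in the later sections.
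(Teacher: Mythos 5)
Your proof is correct and follows essentially the same route as the paper: lift $\bar\phi$ to $\phi$ on the polynomial ring, observe that the degree bound $2j\leq 2k<2l$ keeps $y_l$ out of every $\phi(x_j)$ and hence out of $\phi(R_i)$, and conclude $\phi(R_i)=0$ from Corollary \ref{existence of xl} before applying Lemma \ref{vanishing of Ri}. Your phrasing of the support observation (no monomial of $\phi(R_i)$ is divisible by $y_l$) is in fact slightly cleaner than the paper's statement that $\max\{j:y_j \text{ occurs in } P_i\}=k$.
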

\begin{proof}
Let $I=\mathcal I_{n,k}=(R_1,\ldots,R_k), I^\prime=\mathcal I_{n,l}=(R_1^\prime,\ldots,R_l^\prime)$, $H^*(G_{n,k};\Q)=\frac{\Q[x_1,\ldots,x_k]}{I}$ and 
$H^*(G_{n,l};\Q)=\frac{\Q[y_1,\ldots,y_l]}{I^\prime}$. 
Suppose $\bar \phi(x_r+I)=Q_r+I^\prime$, $r=1,\ldots,k,$ for some homogeneous polynomials $Q_r\in \Q[y_1,\ldots,y_l] $ of degree $2r.$ 
Let $\phi:\Q[x_1,\ldots,x_k]\to \Q[y_1,\ldots,y_l]$ be a graded algebra homomorphism defined by $\phi(x_r)=Q_r$, for $1\leq r\leq k$. 
Let $P_i=\phi(R_i).$  Therefore $P_i \in I^\prime $ is a homogeneous polynomial of degree $2(n-k+i)$. 
Since $\phi$ is a graded algebra homomorphism, $\max\{j:y_j \mbox{ occurs in }P_i\}=k$. 
Hence, by Corollary \ref{existence of xl}, $P_i=0$. Thus we get a graded algebra homomorphism  
$\phi:\Q[x_1,\ldots,x_k]\to\Q[y_1,\ldots,y_l]$ such that $\phi(R_i)=0$ for all $i=1,\ldots,k$. 
Hence by Lemma \ref{vanishing of Ri}, $\phi$ is trivial and thus $\bar\phi$ is trivial. 
\end{proof}

Since $H^*(G_{m-1,l};\Q)$ is a quotient of $H^*(G_{m,l};\Q)$, we obtain the following straightforward application of Theorem \ref{1st theorem}. An analogous result was obtained in \cite [Theorem 1.1]{cs} when $m-l>n-k$ and $m-l\geq 2k^2-k-1.$
\begin{corollary} \label{G_{m,l}}
Let $1\leq k<l$, $n\leq m$ and $\bar\phi:H^*(G_{n,k};\Q)\to H^*(G_{m,l};\Q)$ be a graded algebra homomorphism.  Then $\bar\phi$ is trivial if $n\geq 2l^2+l-2$.  
\end{corollary}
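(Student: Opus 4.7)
The plan is to adapt the proof of Theorem \ref{1st theorem} essentially verbatim, using $n \leq m$ only to enlarge the target of Corollary \ref{existence of xl}. First I would lift $\bar\phi$ to a graded algebra homomorphism $\phi\colon \Q[x_1,\ldots,x_k]\to\Q[y_1,\ldots,y_l]$ by taking homogeneous representatives $\phi(x_r)=Q_r$ of degree $2r$. Since $r\leq k<l$, the degree constraint forces $Q_r\in\Q[y_1,\ldots,y_k]$, so $\phi(R_i)$ lies in $\Q[y_1,\ldots,y_k]$ for every $i$; in particular the variable $y_l$ does not occur in $\phi(R_i)$.

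Next I would check the hypotheses of Corollary \ref{existence of xl} applied to the ideal $\mathcal{I}_{m,l}$. Because $m\geq n\geq 2l^2+l-2$ and $\degree \phi(R_i)=2(n-k+i)\leq 2n\leq 2m$, the corollary applies: if $\phi(R_i)$ were nonzero, it would contain a monomial divisible by $y_{l-1}y_l$, which is incompatible with the absence of $y_l$ noted above. Therefore $\phi(R_i)=0$ for all $i=1,\ldots,k$, and Lemma \ref{vanishing of Ri} then immediately yields the triviality of $\phi$, and hence of $\bar\phi$.

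There is no real obstacle here; the argument is a straightforward ``widen the target Grassmannian'' extension of Theorem \ref{1st theorem}. The only point deserving a line of justification is the compatibility of the two degree bounds (the bound $2n$ intrinsic to the source and the bound $2m$ demanded by the larger target), which follows at once from $n\leq m$.
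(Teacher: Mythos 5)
Your proof is correct. It differs slightly in route from the paper's: the paper treats Theorem \ref{1st theorem} as a black box and deduces the corollary from the remark that $H^*(G_{n,l};\Q)$ is a quotient of $H^*(G_{m,l};\Q)$ --- one composes $\bar\phi$ with the surjection $H^*(G_{m,l};\Q)\to H^*(G_{n,l};\Q)$, applies Theorem \ref{1st theorem} to conclude the composite kills each $x_r+\mathcal I_{n,k}$, and then uses that this surjection is injective in the low degrees $2r\le 2k$ (it is an isomorphism up to degree $2(n-l)$) to conclude $\bar\phi(x_r+\mathcal I_{n,k})=0$ itself. You instead re-run the proof of Theorem \ref{1st theorem} with the enlarged target, invoking Corollary \ref{existence of xl} for $\mathcal I_{m,l}$ (legitimate since $m\ge n\ge 2l^2+l-2$ and $\deg\phi(R_i)\le 2n\le 2m$) together with the observation that $y_l$ cannot appear in $\phi(R_i)$, and then Lemma \ref{vanishing of Ri}. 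Both arguments are sound; the paper's is shorter once Theorem \ref{1st theorem} is in hand but quietly relies on the low-degree stability of the restriction map, whereas yours is self-contained modulo the two auxiliary results and makes the degree bookkeeping ($2n\le 2m$) explicit, which is indeed the only point needing care.
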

\noindent

We give an example to show that the conclusion of Corollary \ref{existence of xl} need not be satisfied for 
smaller values of $n,$ while Theorem \ref{1st theorem} is satisfied. 

\begin{example}
 Let $n=6, l=3$. Then 
 $$x_1^6-3x_1^4x_2+3x_1^2x_2^2-2x_2^3 = (3x_1^2-2x_2)R_1+2x_1R_2 \in \mathcal I_{6,3}$$ 
 is a homogeneous polynomial of degree $12$ which does not contain $x_3$.  \\
 \noindent Since $x_1^n \in \mathcal I_{n,1}$ and $x_1^{k(n-k)} \notin \mathcal I_{n,k}$ for $k\geq2,$ any graded algebra homomorphism from $H^*(G_{n,1};\Q) $ to $H^*(G_{n,k};\Q)$ is trivial if $k\geq2.$ In particular, any graded algebra homomorphism from $ H^*(G_{6,1};\Q)$ to $H^*(G_{6,3};\Q)$ is trivial. Also, from direct calculations, it follows that any graded algebra homomorphism from $H^*(G_{6,2};\Q)$ to $H^*(G_{6,3};\Q)$ is trivial. 
\end{example}
 
\section{Graded algebra homomorphisms from $H^*(G_{n,k};\Q)$ to $H^*(G_{n,l};\Q)$ for $k>l$} \label{section4}
In this section we consider the possible graded algebra homomorphisms from $H^*(G_{n,k};\Q)$ to $H^*(G_{n,l};\Q)$ where $k>l.$  
Unlike the case $k<l,$ Example \ref{nontrivial map} illustrates that there exist nontrivial graded algebra homomorphisms from $H^*(G_{n,k};\Q)$ to $H^*(G_{n,l};\Q)$ if $k>l$, $l=1$ and $k$ divides $n$. For $k>l,$ in certain cases, we prove that the only graded algebra homomorphism from $H^*(G_{n,k};\Q)$ to $H^*(G_{n,l};\Q)$ is trivial 
(Theorems \ref{2nd theorem part1} and \ref{2nd theorem part2}). However, the problem of determining the possible graded algebra homomorphisms  from $H^*(G_{n,k};\Q)$ to $H^*(G_{n,l};\Q)$ remains open for arbitrary $k>l.$ 

\begin{example} \label{nontrivial map}
Let $k>1$. For $n=qk$, consider the graded algebra homomorphism 
$\phi:\Q[x_1,\ldots,x_k] \rightarrow \Q[y]$ defined as 
$\phi(x_i)=0$ for $i<k$ and $\phi(x_k)=cy^k$ for some $0 \neq c \in \Q.$ Suppose $\phi(R_i)  \neq 0$ for some $i$, $1\leq i\leq k.$ Note that 
$\phi(R_i)\neq 0$ if and only if $R_i$ contains 
a monomial of the form $x_k^m$, for some $1 \leq m\leq q,$ with nonzero 
coefficient. Thus $\deg R_i=2(n-k+i)=2km$. 
Hence $i=k(m-q+1)\leq0$ unless $m=q.$ Thus $i=k.$ Also, $\phi(R_k)=(-1)^qc^qy^n.$ 
Therefore $\phi$ induces a nontrivial map $\bar{\phi}$
\begin{eqnarray*}
 \bar{\phi}: \frac{\Q[x_1,\ldots,x_k]}{(R_1,\ldots,R_k)}=H^*(G_{n,k};\Q) \rightarrow 
 H^*(G_{n,1};\Q)=\frac{\Q[y]}{(y^n)}.
\end{eqnarray*}
\end{example}

Now we provide sufficient conditions which will assure the triviality of a graded 
algebra homomorphism from $H^*(G_{n,k};\Q) $ to $H^*(G_{n,l};\Q)$ for $1<l < k.$  
We set $ H^*(G_{n,k};\Q)=\frac{\Q[x_1,\ldots,x_k]}{(R_1,\ldots,R_k)},$ 
$ H^*(G_{n,l};\Q)=\frac{\Q[y_1,\ldots,y_l]}{(R_1^\prime,\ldots,R_l^\prime)},$ 
$I=\mathcal I_{n,k}=(R_1,\ldots,R_k)$ and $I^\prime=\mathcal I_{n,l}=(R_1^\prime,\ldots,R_l^\prime).$

\begin{theorem} \label{vanishing of y_l-1 and y_l}
Let $1<l<k$ and $\phi: \Q[x_1,\ldots,x_k] \rightarrow \Q[y_1,\ldots,y_l]  $ be a graded 
algebra homomorphism such that $\phi(\mathcal I_{n,k}) \subseteq \mathcal I_{n,l}$. 
Suppose $(\phi(x_1),\ldots,\phi(x_k)) \subseteq (y_1,\ldots,y_{l-2}).$ 
Then $\phi$ 
is trivial if $n \geq 2l^2+kl-2.$
\end{theorem}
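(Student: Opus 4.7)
The plan is to show that $P_i := \phi(R_i) = 0$ in $\Q[y_1,\ldots,y_l]$ for every $1 \leq i \leq k$; Lemma \ref{vanishing of Ri} will then immediately yield that $\phi$ is trivial. By hypothesis $P_i \in \mathcal{I}_{n,l}$ is homogeneous of degree $2(n-k+i)$, and because $P_i$ is a polynomial expression in $\phi(x_1),\ldots,\phi(x_k)$, the inclusion $(\phi(x_1),\ldots,\phi(x_k)) \subseteq (y_1,\ldots,y_{l-2})$ forces $P_i \in (y_1,\ldots,y_{l-2})$ as well.

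First I dispose of small $i$: for $i \leq k - l$ one has $\deg P_i < 2(n-l+1) = \deg R_1^\prime$, and since every nonzero element of $\mathcal{I}_{n,l}$ has degree at least $2(n-l+1)$, necessarily $P_i = 0$. So I fix $k - l + 1 \leq i \leq k$ and set $j := i - k + l \in \{1,\ldots,l\}$, so that $\deg P_i = 2(n-l+j)$. Assuming for contradiction $P_i \neq 0$, I observe that $n \geq 2l^2 + kl - 2$ easily implies the arithmetic hypothesis of Proposition \ref{existence of monomials} with $c_1 = c_2 = 1$, and that proposition then forces $P_i$ to contain, with nonzero coefficient, a monomial from $\varLambda_j$.

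I would next run a case analysis on the surviving monomials. Any monomial from $D_j$, together with the exceptional monomial from $D_{l,{\bf e_{l-1}}}$ when $j = l$, is pure in $y_{l-1},y_l$ and is therefore excluded by $P_i \in (y_1,\ldots,y_{l-2})$. Every remaining monomial of $\varLambda_j$ has the shape ${\bf y}^{\bf s} y_{l-1}^{m_1} y_l^{m_2}$ with ${\bf s}$ supported on $\{1,\ldots,l-2\}$ and $\wt {\bf s} \leq j - 1 \leq l - 1$, so its ``lower weight'' $\sum_{r \leq l-2} r a_r$ is at most $l - 1$. I contradict this with a matching lower bound: each monomial of $\phi(x_r)$ is divisible by some $y_s$ with $s \leq l - 2$, so every monomial of $\phi(x_1)^{n_1}\cdots\phi(x_k)^{n_k}$ has lower weight at least $|{\bf n}|$, and for any ${\bf n}$ with $\wt {\bf n} = n - k + i$ one has $|{\bf n}| \geq \lceil (n-k+i)/k \rceil \geq l$, the last inequality following from $n \geq 2l^2 + kl - 2$ via $(n - k + 1)/k \geq (l-1) + (2l^2 - 1)/k > l - 1$. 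This yields $l \leq l - 1$, the desired contradiction, so $P_i = 0$.

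The main obstacle is calibrating the two bounds so that they collide: Proposition \ref{existence of monomials} needs $n$ above a threshold depending on $c_1, c_2$, while the lower bound $|{\bf n}| \geq l$ needs $n$ above a threshold growing linearly in $k$. The hypothesis $n \geq 2l^2 + kl - 2$ is precisely what is needed to make both bounds work with $c_1 = c_2 = 1$, and verifying $\lceil (n-k+i)/k \rceil \geq l$ from this bound is the one technical step that warrants a little care.
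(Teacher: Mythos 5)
Your proposal is correct, and it follows the paper's skeleton exactly up to the last step: reduce to showing $\phi(R_i)=0$, kill $i\le k-l$ by degree, and for the remaining $i$ invoke Proposition \ref{existence of monomials}(2) on $P_i=\phi(R_i)\in\mathcal I_{n,l}\cap(y_1,\ldots,y_{l-2})$ to produce a forbidden monomial. Where you genuinely diverge is in how the contradiction is extracted. The paper applies the proposition with $c_1=1$, $c_2=k$, so the guaranteed monomial has $y_l$-exponent at least $k$, and then splits the monomials ${\bf x}^{\bf n}$ of $R_{k-l+i}$ according to whether $n_{l+1}+\cdots+n_k\ge l$: in the first case it uses the same ``lower weight'' count you use (each factor $\phi(x_j)$ contributes a variable of index $\le l-2$), and in the second case it bounds the $y_l$-exponent of $\phi({\bf x}^{\bf n})$ by $(n_{l+1}+\cdots+n_k)\lfloor k/l\rfloor<k$. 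You instead take $c_1=c_2=1$ and observe that the hypothesis $n\ge 2l^2+kl-2$ already forces $|{\bf n}|\ge\lceil(n-k+i)/k\rceil\ge l$ for \emph{every} monomial of $R_i$, so the lower weight of every monomial of $P_i$ is at least $l$, while every monomial of $\varLambda_j$ has lower weight at most $j-1\le l-1$. This collapses the paper's two-case analysis into a single weight count and dispenses with the $y_l$-degree estimate; your identification of ${\bf e_{l-1}}$ (only when $j=l$) as the unique ${\bf s}\in Q(j)$ not supported on $\{1,\ldots,l-2\}$ is also accurate, and in fact the weight bound covers those pure monomials as well. All the arithmetic ($n-l+1\ge 2l^2-1$ for the proposition with $c_1=c_2=1$, and $(n-k+1)/k>l-1$) checks out, so the argument is complete and, if anything, cleaner than the published one.
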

\begin{proof} 
By Lemma \ref{vanishing of Ri}, 
it suffices to show that $ \phi(R_i)=0$ for all $i=1,\ldots,k.$ 
Let $\phi(R_i)=P_i $ for some $P_i\in I^\prime.$  
Since $\deg P_i<2(n-l+1)$ for $i=1,\ldots,k-l,$ 
$P_i=0$ for $i=1,\ldots,k-l.$ Suppose $P_{k-l+i} \neq 0$ for some $i\geq 1.$  
Since $n-l+1 \geq  l(l-1)+kl+(l+1)(l-1)$,  
by Proposition \ref{existence of monomials}\eqref{existence of monomials part2}, $P_{k-l+i}$ contains a monomial $y_{l-1}^{m_1}y_l^{m_2}$ or 
 ${\bf y}^{\bf s}y_{l-1}^{m_1}y_l^{m_2}$, where $m_1\geq1$, $m_2 \geq k$ and $\wt {\bf s}<i\leq l,$ 
 with nonzero coefficient. Since 
 $\phi(x_i)$ does not contain a monomial of the form $y_{l-1}^{m_1}y_l^{m_2}$ with 
 nonzero coefficient, $P_{k-l+i}$ contains a monomial ${\bf y}^{\bf s}y_{l-1}^{m_1}y_l^{m_2}$ 
 with $m_1\geq1$, $m_2 \geq k$ and $\wt {\bf s}<l.$ 
 We have 
 \begin{eqnarray}\label{equation for R(k-l+i)} P_{k-l+i}=\phi(R_{k-l+i})=\sum_{\substack{{\bf n} \in \N^k, \wt {\bf n}=n-l+i,\\n_{l+1}+\cdots+n_k <l}}\phi(c_{\bf n}{\bf x}^{{\bf n}})+
  \sum_{\substack{{\bf n} \in \N^k, \wt {\bf n}=n-l+i,\\n_{l+1}+\cdots+n_k \geq l}}\phi(c_{\bf n}{\bf x}^{{\bf n}}).
  \end{eqnarray}
  If $n_{l+1}+\cdots+n_k \geq l$ then 
for every monomial ${\bf y}^{{\bf s}^\prime}y_{l-1}^{m_3}y_l^{m_4}$ in $\phi({\bf x}^{\bf n})$, 
$\wt {\bf s}^\prime \geq l. $ 
Hence ${\bf y}^{\bf s}y_{l-1}^{m_1}y_l^{m_2}$ does not occur in the second summand of (\ref{equation for R(k-l+i)}).  
Suppose $n_{l+1}+\cdots+n_k <l.$ Since the highest possible 
power of $y_l$ in a monomial in $ \phi(x_j)$ is $\lfloor \frac{k}{l}\rfloor$ for all $j=1,\ldots,k$ and $\phi(x_l)$ does not contain $y_l$ with nonzero coefficient, the power of $y_l$ in a monomial in $\phi({\bf x}^{\bf n})$ is $\leq(n_{l+1}+\cdots+n_k)
\lfloor\frac{k}{l} \rfloor<l\lfloor\frac{k}{l}\rfloor \leq k$. This implies that the power of $y_l$ in a monomial in $\phi({\bf x}^{\bf n})<k$.    
Hence ${\bf y}^{\bf s}y_{l-1}^{m_1}y_l^{m_2}$ does not occur in the first summand of (\ref{equation for R(k-l+i)}). 
This implies that $\phi(R_{k-l+i})$ does not contain ${\bf y}^{\bf s}y_{l-1}^{m_1}y_l^{m_2}$, which leads to a contradiction. 
\end{proof}

\begin{corollary} \label{corollary vanishing of y_l-1 and y_l}
 Let $1<l<k$ and $\bar{\phi}: H^*(G_{n,k};\Q) \rightarrow H^*(G_{n,l};\Q)  $ be a graded 
algebra homomorphism. 
For $i=1,\ldots,k,$ let $\bar\phi(x_i+\mathcal I_{n,k})=Q_i+\mathcal I_{n,l}$ for some homogeneous polynomials $Q_i \in \Q[y_1,\ldots,y_l].$
Suppose 
$(Q_1,\ldots,Q_k) \subseteq (y_1,\ldots,y_{l-2}).$ 
Then $\bar \phi$ 
is trivial if $n \geq 2l^2+kl-2.$
\end{corollary}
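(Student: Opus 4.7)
The plan is to reduce this corollary directly to Theorem \ref{vanishing of y_l-1 and y_l}, since the corollary is essentially the quotient-level reformulation of that theorem. The lifting from the graded quotient algebras to the polynomial rings is automatic: given representatives $Q_i \in \Q[y_1,\ldots,y_l]$ of $\bar\phi(x_i + \mathcal I_{n,k})$, define a graded algebra homomorphism $\phi: \Q[x_1,\ldots,x_k] \to \Q[y_1,\ldots,y_l]$ by $\phi(x_i) = Q_i$ for $1 \leq i \leq k$. By the universal property of the polynomial ring this uniquely determines $\phi$ as a graded algebra homomorphism.

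Next I would verify that $\phi(\mathcal I_{n,k}) \subseteq \mathcal I_{n,l}$. This follows because $\bar\phi$ is well-defined on the quotient $H^*(G_{n,k};\Q) = \Q[x_1,\ldots,x_k]/\mathcal I_{n,k}$: for each generator $R_j$ of $\mathcal I_{n,k}$, the element $\phi(R_j) = R_j(Q_1,\ldots,Q_k)$ represents $\bar\phi(R_j + \mathcal I_{n,k}) = \bar\phi(0) = 0 + \mathcal I_{n,l}$, so $\phi(R_j) \in \mathcal I_{n,l}$. Since the $R_j$'s generate $\mathcal I_{n,k}$, we get $\phi(\mathcal I_{n,k}) \subseteq \mathcal I_{n,l}$.

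The hypothesis $(Q_1,\ldots,Q_k) \subseteq (y_1,\ldots,y_{l-2})$ translates verbatim to $(\phi(x_1),\ldots,\phi(x_k)) \subseteq (y_1,\ldots,y_{l-2})$, so the two hypotheses of Theorem \ref{vanishing of y_l-1 and y_l} are met, together with $n \geq 2l^2+kl-2$. Applying that theorem, $\phi$ is trivial, i.e., $\phi(x_i) = Q_i = 0$ in $\Q[y_1,\ldots,y_l]$ for all $i = 1,\ldots,k$. Consequently $\bar\phi(x_i + \mathcal I_{n,k}) = 0 + \mathcal I_{n,l}$ for every $i$, and since the $x_i + \mathcal I_{n,k}$ generate $H^*(G_{n,k};\Q)$ as a graded algebra, $\bar\phi$ vanishes in positive degrees, as required.

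Since Theorem \ref{vanishing of y_l-1 and y_l} already does the entire nontrivial work (using Proposition \ref{existence of monomials} to force the coefficient of the ${\bf y}^{\bf s} y_{l-1}^{m_1} y_l^{m_2}$ monomials to be zero), there is no genuine obstacle here; the only thing one must be mildly careful about is the compatibility check $\phi(\mathcal I_{n,k}) \subseteq \mathcal I_{n,l}$, which is immediate from $\bar\phi$ being a well-defined map on the quotient.
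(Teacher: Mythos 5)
Your proposal is correct and follows essentially the same route as the paper: lift $\bar\phi$ to a polynomial-ring homomorphism $\phi$ with $\phi(x_i)=Q_i$, observe that $\phi(\mathcal I_{n,k})\subseteq\mathcal I_{n,l}$ because $\phi$ descends to the well-defined $\bar\phi$, and then invoke Theorem \ref{vanishing of y_l-1 and y_l}. The paper's own proof is just a terser version of exactly this argument.
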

\begin{proof}
 Let $\phi:\Q[x_1,\ldots,x_k] \rightarrow \Q[y_1,\ldots,y_l]$ be a graded algebra homomorphism defined as 
$\phi(x_i)=Q_i.$ Since $\phi$ induces a map $\bar\phi,$ $\phi(I)\subseteq I^\prime.$ 
Therefore by Theorem \ref{vanishing of y_l-1 and y_l}, the result follows.
\end{proof}

The following theorem generalizes Theorem \ref{vanishing of y_l-1 and y_l}.

\begin{theorem} \label{vanishing of y_l}
 Let $1<l<k$ and $\phi: \Q[x_1,\ldots,x_k] \rightarrow \Q[y_1,\ldots,y_l] $ be a graded 
algebra homomorphism such that $\phi(\mathcal I_{n,k}) \subseteq \mathcal I_{n,l}$. 
Suppose $(\phi(x_1),\ldots,\phi(x_k)) \subseteq (y_1,\ldots,y_{l-1}).$ 
Then $\phi$ is trivial if $n \geq 3k^2-2.$
\end{theorem}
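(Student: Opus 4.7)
The plan is to mirror the strategy of the proof of Theorem \ref{vanishing of y_l-1 and y_l}. By Lemma \ref{vanishing of Ri} it suffices to show $\phi(R_i) = 0$ for every $i = 1, \ldots, k$. For $i \leq k-l$ this is automatic by degree considerations, since $\deg \phi(R_i) = 2(n-k+i) < 2(n-l+1)$ lies strictly below the minimal degree of any generator of $\mathcal I_{n,l}$. The actual work is therefore to show $\phi(R_{k-l+j}) = 0$ for every $1 \leq j \leq l$.

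For each such $j$, I would argue by contradiction: suppose $P_{k-l+j} := \phi(R_{k-l+j})$ is nonzero. Applying Proposition \ref{existence of monomials} with $c_1 = 1$ and $c_2$ chosen as large as its hypothesis $n - l + 1 \geq l(l-1) + (c_1+l)(l-1) + c_2 l$ allows (which under $n \geq 3k^2-2$ is of size around $k$), I obtain a monomial $M$ of the form $y_{l-1}^{m_1} y_l^{m_2}$ or $\mathbf y^\mathbf s y_{l-1}^{m_1} y_l^{m_2}$, with $m_1 \geq 1$, $m_2 \geq c_2$, $\wt \mathbf s \leq j - 1$, appearing in $P_{k-l+j}$ with nonzero coefficient. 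Since $P_{k-l+j}$ is the sum over $\mathbf n$ with $\wt \mathbf n = n-l+j$ of signed multinomial multiples of $\phi(\mathbf x^\mathbf n)$, the monomial $M$ must also appear (with nonzero coefficient) in $\phi(\mathbf x^\mathbf n)$ for at least one such $\mathbf n$.

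I would then exploit the hypothesis $\phi(x_r) \in (y_1, \ldots, y_{l-1})$ in two parallel ways, analogous to but weaker than the dichotomy used in Theorem \ref{vanishing of y_l-1 and y_l}. First, every monomial of $\phi(x_r)$ has the sum of its $y_1, \ldots, y_{l-1}$ exponents at least $1$, so the same sum in any monomial of $\phi(\mathbf x^\mathbf n)$ is at least $|\mathbf n|$; read off $M$, this gives $|\mathbf n| \leq (s_1 + \cdots + s_{l-1}) + m_1 \leq (j-1) + m_1$. Second, every monomial of $\phi(x_r)$ has $y_l$-exponent at most $\lfloor (r-1)/l \rfloor$, vanishing for $r \leq l$, so $s_l + m_2 \leq \sum_r n_r \lfloor(r-1)/l\rfloor \leq (\wt \mathbf n - |\mathbf n|)/l$. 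Combining these with $|\mathbf n| \geq \wt \mathbf n/k$ and the weight relation $(l-1)m_1 + l m_2 = n-l+j - \wt \mathbf s$, a short calculation yields
\[
m_2 \;\leq\; \frac{(n-l+j)(k-l+1)}{kl} + \frac{(l-1)(j-1)}{l}.
\]
The bound $n \geq 3k^2 - 2$ is calibrated so that this right-hand side stays strictly below the allowed $c_2$ for every $j \in \{1, \ldots, l\}$, contradicting $m_2 \geq c_2$. Hence $\phi(R_{k-l+j}) = 0$ for every $j$, and Lemma \ref{vanishing of Ri} concludes that $\phi$ is trivial.

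The main obstacle I expect is the simultaneous calibration of $c_1$ and $c_2$: Proposition \ref{existence of monomials} bounds $c_2$ from above in terms of $n$ and $l$, while the contradiction demands that $c_2$ exceed a bound of order $n(k-l+1)/(kl)$, and closing this gap is precisely what forces the upgrade from $n \geq 2l^2 + kl - 2$ in Theorem \ref{vanishing of y_l-1 and y_l} to $n \geq 3k^2 - 2$ here. In the earlier theorem the stronger hypothesis $\phi(x_r) \in (y_1, \ldots, y_{l-2})$ collapsed the analogue of $|\mathbf n| \leq (j-1) + m_1$ to the $m_1$-free statement $|\mathbf n| < l$, which is what made the tighter $n$-bound possible; losing that collapse is the reason why an additional factor of order $k$ has to be absorbed. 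Extra care will likely be needed in the boundary case $j = l$, where the choice of $\mathbf s$ maximizing the right-hand side above is the one with the heaviest permissible $\wt\mathbf s$, and one must verify that even this worst case lies below the chosen $c_2$.
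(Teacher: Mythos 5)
Your route is genuinely different from the paper's, and its skeleton is sound, but it hinges on a quantitative calibration that you assert rather than prove, and that calibration is where all the difficulty lives. The paper does not try to locate a high $y_l$-power inside $\phi(R_{k-l+j})$ directly. Instead it composes $\phi$ with the projection $\psi:\Q[y_1,\ldots,y_l]\to\Q[y_{l-1},y_l]$ killing $y_1,\ldots,y_{l-2}$ and works with $h=\psi\circ\phi$: in two variables, degree counting forces $h(R_{k-l+i})=\lambda_iQ_i$ for $i<l$ and $h(R_k)=\lambda_lQ_l+\lambda_l^\prime y_{l-1}Q_1$ with $Q_i=\psi(R_i^\prime)$; the hypothesis $\phi(x_r)\in(y_1,\ldots,y_{l-1})$ forces every monomial of $h(\mathbf{x}^{\mathbf{n}})$ to have $y_{l-1}$-exponent at least $|\mathbf{n}|\ge 2k>l$, while $Q_i$ is shown (via Proposition \ref{proposition of gh} with parameters of size $O(l)$) to contain a monomial $y_{l-1}^ry_l^s$ with $r\le l$; comparing coefficients kills all the $\lambda$'s, so $h$ is trivial, hence $\phi(x_r)\in(y_1,\ldots,y_{l-2})$, and Theorem \ref{vanishing of y_l-1 and y_l} finishes. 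In short, the paper exploits the \emph{smallness} of the available $y_{l-1}$-exponent, which costs only bounded parameters, whereas you exploit the \emph{largeness} of the required $y_l$-exponent, which forces $c_2$ of order $n/l$ in Proposition \ref{existence of monomials} and pushes that proposition to its limit.

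The gap is precisely your sentence ``the bound $n\ge 3k^2-2$ is calibrated so that this right-hand side stays strictly below the allowed $c_2$'': this must actually be verified, and it is razor-thin. With $c_1=1$ the hypothesis of Proposition \ref{existence of monomials} caps $c_2$ at $(n-2l^2+2)/l$, while your bound on $m_2$ at the worst index $j=l$ is roughly $n(k-l+1)/(kl)+(l-1)^2/l$; closing the gap requires about $n\ge k\bigl(3l+2+\tfrac{1}{l-1}\bigr)$. For $l=k-1$ this equals $3k^2-k+1+\tfrac{2}{k-2}$, which sits below $3k^2-2$ only for $k\ge 4$, with the two sides \emph{equal} at $k=4$; for $k=3$, $l=2$ the clean inequality demands $n\ge 27$ while the theorem allows $n=25,26$, and those cases survive only because $m_2$ and $c_2$ are integers. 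So your approach does appear to close, but only after this delicate, case-by-case verification, which is the real content of the argument and is absent from your write-up. Either carry out that calibration explicitly (including the boundary cases above), or adopt the paper's two-variable reduction, which sidesteps the issue entirely.
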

\begin{proof}
 Let $\psi:\Q[y_1,\ldots,y_l] \rightarrow \Q[y_{l-1},y_l]$ be a graded algebra homomorphism defined as 
 $\psi(y_i)=0$ for $i=1,\ldots,l-2, \psi(y_{l-1})=y_{l-1}$ and $\psi(y_l)=y_l.$ 
 Let $J=(\psi(R_1^\prime),\ldots,\psi(R_l^\prime))$ and 
 $h=\psi \circ \phi.$  As $n\geq3k^2-2,$ so $n \geq 2l^2+kl-2.$ 
 Hence
 by Theorem \ref{vanishing of y_l-1 and y_l}, it suffices to show that $h$ is trivial. 
 We show that $ h(R_i)=0 $ for all $i=1,\ldots,k.$ 
 Since $\deg R_i<2(n-l+1)$ for $i=1,\ldots,k-l$ and $\phi(R_i) \in I^\prime,$ 
 $\phi(R_i)=0$ for $1 \leq i \leq k-l$ and hence $h(R_i)=0$ for $1 \leq i \leq k-l. $ 
 Let $\psi(R_i^\prime)=Q_i$ for $i=1,\ldots,l.$ Since $\phi(R_i) \in I^\prime,$ 
 $h(R_i) \in J$ for all $1 \leq i \leq k.$ 
 Therefore 
 \begin{eqnarray}
\label{equation for lambdai}h(R_{k-l+i}) &=& \lambda_i Q_i \mbox{ for }1 \leq i<l \mbox{ and }\\
\label{equation for lmk}h(R_k)&=& \lambda_l Q_l + \lambda_l^\prime y_{l-1} Q_1.
\end{eqnarray} 
 We claim that $Q_i$ contains a monomial 
 $y_{l-1}^ry_l^s$ with $1\leq r\leq l,$ $s\geq l.$ Since $n-l+i \geq n-l+1 \geq n-k+1 \geq 3k^2-k-1\geq 2k^2-1\geq l(l-1)+l^2+(l-1),$ 
 by Proposition \ref{proposition of gh}, there exist integers $r\geq 1, s \geq l,$ such that $n-l+i=r(l-1)+sl.$ 
 Hence $Q_i$ contains a monomial $y_{l-1}^ry_l^s.$ 
 If $r > l, $ then replace $r$ by $r_1=r-l$ and $s$ by $s_1=s+l-1.$ If 
 $r_1 \leq l$ then we are done. Otherwise continue as above. This proves the claim. \\
Note that if ${\bf x}^{\bf n}$ occurs in $R_{k-l+i},$ $1\leq i\leq l,$ then $|{\bf n}| \geq 2k.$ Suppose $|{\bf n}|<2k.$ 
Then $\deg {\bf x}^{\bf n}=2n_1+\cdots+2kn_k\leq 
2k(n_1+\cdots+n_k)<2k(2k).$ But $\deg R_{k-l+i} \geq 2(n-l+1) 
\geq 2(3k^2-k-1)\geq 2(2k^2),$ a contradiction. Hence $|{\bf n}| \geq 2k.$ 
Since $\phi(x_j)$ does not contain a pure monomial in $y_l$ and $\psi(y_j)=0$ for $1 \leq j<l-1,$ 
$h({\bf x}^{\bf n})$ is sum of monomials of the form $y_{l-1}^{m_1}y_l^{m_2}$ with $m_1 \geq 2k > k>l$, for a monomial ${\bf x}^{\bf n}$ occurring in $R_{k-l+i}.$ Hence comparing the coefficients of 
$y_{l-1}^ry_l^s$, where $1\leq r\leq l$, $s\geq l$ in Equation (\ref{equation for lambdai}) we get that $\lm_i=0.$ \\
Let $i=l.$  
If $h({\bf x}^{\bf n}) $ contains a monomial $y_{l-1}^ry_l^s$ with nonzero coefficient 
then $r \geq 2k>2l.$ 
Note that $Q_l$ contains a monomial of the form $y_{l-1}^ry_l^s,$ 
with $1\leq r\leq l$ and $s \geq l.$ Let $r_1=r+l,s_1=s-(l-1).$ Then comparing the coefficients of 
$y_{l-1}^ry_l^s$ and $y_{l-1}^{r_1}y_l^{s_1}$ in Equation (\ref{equation for lmk}), 
we get that 
\begin{eqnarray}
 \label{equation1 for lmk in case2}
  \frac{(r+s)!}{r!s!}\lm_l-\frac{(r+s-1)!}{(r-1)!s!} \lm_l^\prime&=&0\\
   \label{equation2 for lmk in case2}
  \frac{(r_1+s_1)!}{r_1!s_1!}\lm_l-\frac{(r_1+s_1-1)!}{(r_1-1)!s_1!} \lm_l^\prime&=&0.
\end{eqnarray}
Solving Equations (\ref{equation1 for lmk in case2}) and (\ref{equation2 for lmk in case2}), 
we get that $\lm_l=\lm_l^\prime=0.$ Hence $h(R_k)=0.$ Therefore by Lemma 
\ref{vanishing of Ri}, $h$ is trivial. 
\end{proof}

\begin{corollary}\label{cor}
 Let $\bar{\phi}: H^*(G_{n,k};\Q) \rightarrow H^*(G_{n,l};\Q)  $, $1<l<k$ be a graded 
algebra homomorphism. 
For $i=1,\ldots,k,$ let $\bar\phi(x_i+\mathcal I_{n,k})=Q_i+\mathcal I_{n,l}$ for some homogeneous polynomials $Q_i \in \Q[y_1,\ldots,y_l].$ Suppose 
$(Q_1,\ldots,Q_k) \subseteq (y_1,\ldots,y_{l-1}).$ 
Then $\bar \phi$ is trivial if $n \geq 3k^2-2.$
\end{corollary}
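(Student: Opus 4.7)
The plan is to deduce Corollary \ref{cor} from Theorem \ref{vanishing of y_l} in exactly the same way that Corollary \ref{corollary vanishing of y_l-1 and y_l} was deduced from Theorem \ref{vanishing of y_l-1 and y_l}, namely by lifting $\bar\phi$ to the polynomial level and invoking the theorem.

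First I would define a graded algebra homomorphism
$$\phi:\Q[x_1,\ldots,x_k]\longrightarrow \Q[y_1,\ldots,y_l]$$
by setting $\phi(x_i)=Q_i$ for $1\le i\le k$. This is unambiguous because $\Q[x_1,\ldots,x_k]$ is a free commutative graded $\Q$-algebra on the generators $x_i$, and the chosen images $Q_i$ are homogeneous of degree $2i$. The very fact that $\bar\phi$ is well defined on the quotient rings says that elements of $\mathcal I_{n,k}$ are sent into $\mathcal I_{n,l}$, i.e.\ $\phi(\mathcal I_{n,k})\subseteq \mathcal I_{n,l}$.

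Next I would translate the hypothesis directly: $(\phi(x_1),\ldots,\phi(x_k))=(Q_1,\ldots,Q_k)\subseteq (y_1,\ldots,y_{l-1})$. Together with the numerical assumption $n\ge 3k^2-2$, this puts $\phi$ in the exact situation of Theorem \ref{vanishing of y_l}. Applying that theorem yields that $\phi$ is trivial, meaning $\phi(x_i)=0$ for all $i=1,\ldots,k$ (the argument of Lemma \ref{vanishing of Ri} produces this once $\phi(R_i)=0$ for all $i$). Consequently each $Q_i\in \mathcal I_{n,l}$, so $\bar\phi(x_i+\mathcal I_{n,k})=0$ for all $i$.

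Finally, since the classes $x_i+\mathcal I_{n,k}$, $1\le i\le k$, generate $H^*(G_{n,k};\Q)$ as a $\Q$-algebra, the vanishing of $\bar\phi$ on these generators forces $\bar\phi$ to vanish in all positive degrees; hence $\bar\phi$ is trivial. There is no genuine obstacle here, as the theorem does all the real work; the only point to notice is that well-definedness of $\bar\phi$ is equivalent to the containment $\phi(\mathcal I_{n,k})\subseteq\mathcal I_{n,l}$ needed by Theorem \ref{vanishing of y_l}.
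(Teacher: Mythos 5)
Your proposal is correct and matches the paper's proof: the paper likewise lifts $\bar\phi$ to a polynomial-level homomorphism $\phi$ with $\phi(x_i)=Q_i$, observes that $\phi(\mathcal I_{n,k})\subseteq\mathcal I_{n,l}$ because $\phi$ induces $\bar\phi$, and then applies Theorem \ref{vanishing of y_l} (mirroring the deduction of Corollary \ref{corollary vanishing of y_l-1 and y_l} from Theorem \ref{vanishing of y_l-1 and y_l}). No gaps.
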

\begin{proof}
Using argument as in Corollary \ref{corollary vanishing of y_l-1 and y_l}, the result follows from Theorem \ref{vanishing of y_l}. 
\end{proof}

As a consequence we determine the possible graded algebra homomorphisms from $H^*(G_{n,k};\Q)$ to $H^*(G_{n,l};\Q)$ when $2<l<k<2(l-1).$
\begin{theorem}\label{2nd theorem part1}
 Let $2<l<k<2(l-1)$ and $\bar\phi:H^*(G_{n,k};\Q)\to H^*(G_{n,l};\Q)$ be a graded algebra homomorphism. Then $\bar\phi$ is trivial if $n\geq3k^2-2.$ 
\end{theorem}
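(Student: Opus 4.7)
The strategy is to reduce to Corollary~\ref{cor}: it suffices to exhibit lifts $Q_i\in\Q[y_1,\ldots,y_l]$ of $\bar\phi(x_i+\mathcal I_{n,k})$ with $(Q_1,\ldots,Q_k)\subseteq(y_1,\ldots,y_{l-1})$, since the required bound $n\geq 3k^2-2$ is already assumed. Because $\deg Q_i=2i$, a pure power of $y_l$ in $Q_i$ would force $l\mid i$; the hypothesis $k<2(l-1)$ gives $1\leq i\leq k\leq 2l-3<2l$, so $i=l$ is the only possibility. Consequently $Q_i\in(y_1,\ldots,y_{l-1})$ automatically for $i\neq l$, and we may write $Q_l=a\,y_l+T$ with $a\in\Q$ and $T\in(y_1,\ldots,y_{l-1})$. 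The entire argument then reduces to proving $a=0$.

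To do this, let $\phi:\Q[x_1,\ldots,x_k]\to\Q[y_1,\ldots,y_l]$ denote the lift $\phi(x_i)=Q_i$ and let $\pi:\Q[y_1,\ldots,y_l]\to\Q[y_l]$ be the projection sending $y_1,\ldots,y_{l-1}$ to $0$ and fixing $y_l$. Set $\psi=\pi\circ\phi$, so that $\psi(x_i)=0$ for $i\neq l$ and $\psi(x_l)=a\,y_l$. Using the expansion~(\ref{expression of R_i}), the only monomial in $R_j$ surviving under $\psi$ is $x_l^{n_l}$ with $l n_l=n-k+j$, yielding $\psi(R_j)=(-1)^{n_l}a^{n_l}y_l^{n_l}$ when $l\mid n-k+j$ and $\psi(R_j)=0$ otherwise. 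The same calculation applied to $R'_1,\ldots,R'_l$ gives $\pi(R'_i)=0$ for $i<l$ (since $l\nmid n-l+i$ when $1\leq i<l$) and $\pi(R'_l)=(-1)^{n/l}y_l^{n/l}$ if $l\mid n$, else $0$. Hence $\pi(\mathcal I_{n,l})=(y_l^{n/l})$ when $l\mid n$ and $\pi(\mathcal I_{n,l})=(0)$ when $l\nmid n$.

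The inclusion $\phi(R_j)\in\mathcal I_{n,l}$ forces $\psi(R_j)\in\pi(\mathcal I_{n,l})$ for every $j\in\{1,\ldots,k\}$. If $l\nmid n$, then since $k>l$ the interval $\{n-k+1,\ldots,n\}$ contains a multiple of $l$, and the corresponding $j$ gives $\pm a^{n_l}y_l^{n_l}=0$ with $n_l\geq 1$ (because $n$ is large), whence $a=0$. If $l\mid n$, I take $j=k-l\geq 1$: then $n_l=n/l-1<n/l$, so $y_l^{n_l}$ cannot lie in $(y_l^{n/l})$ and once again $a=0$. In both cases $(Q_1,\ldots,Q_k)\subseteq(y_1,\ldots,y_{l-1})$, and Corollary~\ref{cor} completes the proof. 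The subtlest point is the $l\mid n$ case, where the ideal $\pi(\mathcal I_{n,l})$ is nonzero; it is essential there to exploit the strict inequality $k>l$ by passing to the relation $R_{k-l}$, whose pure $y_l$-part sits in strictly smaller $y_l$-degree than the generator of $\pi(\mathcal I_{n,l})$.
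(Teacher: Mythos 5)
Your reduction to Corollary \ref{cor} is sound, and your observation that only $Q_l$ can contain a pure power of $y_l$ (so that everything hinges on showing $a=0$) matches the paper's strategy. The case $l\mid n$ is also handled correctly. But there is a genuine error in the case $l\nmid n$: you claim $\pi(R'_i)=0$ for $i<l$ because ``$l\nmid n-l+i$ when $1\leq i<l$'', and conclude $\pi(\mathcal I_{n,l})=(0)$. This is backwards. Among the $l$ consecutive integers $n-l+1,\ldots,n$ exactly one is divisible by $l$; it equals $n$ precisely when $l\mid n$, and otherwise it is $l\lfloor n/l\rfloor=n-l+i_0$ for a unique $i_0\in\{1,\ldots,l-1\}$, so that $\pi(R'_{i_0})=\pm y_l^{\lfloor n/l\rfloor}$ and $\pi(\mathcal I_{n,l})=(y_l^{\lfloor n/l\rfloor})\neq(0)$. (Example: $n=7$, $l=3$ gives $n-l+2=6$.) Your constraint then only says $\psi(R_j)\in(y_l^{\lfloor n/l\rfloor})$. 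The multiple of $l$ in $\{n-k+1,\ldots,n\}$ nearest to $n$ is again $l\lfloor n/l\rfloor$, and the corresponding $\psi(R_{j_0})=\pm a^{\lfloor n/l\rfloor}y_l^{\lfloor n/l\rfloor}$ lies in that ideal for \emph{every} value of $a$, so no contradiction results. You could hope to use a second, smaller multiple of $l$ in the interval (as you do in the $l\mid n$ case with $j=k-l$), but since $k<2(l-1)<2l$ the interval $\{n-k+1,\ldots,n\}$ need not contain two multiples of $l$ when $n\bmod l$ is large (it would require $k\geq (n\bmod l)+l+1$, which can exceed $2l-3$). So the argument genuinely fails for some residues of $n$ modulo $l$.

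The underlying problem is that projecting all the way to $\Q[y_l]$ discards too much information; the paper is explicit about this limitation in Discussion \ref{discussion} and Remark 4.10, where the one-variable projection $\rho$ only succeeds under the extra hypothesis $f>f_1$ (Theorem \ref{2nd theorem part2}). The paper's proof of Theorem \ref{2nd theorem part1} instead projects to the two-variable ring $\Q[y_{l-1},y_l]$, where $h(x_{l-1})=ay_{l-1}$ and $h(x_l)=by_l$. Comparing coefficients of mixed monomials $y_{l-1}^{r}y_l^{s}$ (whose existence in the right degrees is guaranteed by Proposition \ref{proposition of gh}) first forces $a^{r_1}b^{s_1}=0$, and then, assuming $b\neq 0$ hence $a=0$, shows that all the coefficients $\lambda_i,\lambda_l,\lambda'_l$ in the expansion of $h(R_{k-l+i})$ over $(\psi(R'_1),\ldots,\psi(R'_l))$ vanish, so that $h(R_{k-l+j})=0$ even for the index $j$ with $l\mid n-l+j$; since that $R_{k-l+j}$ contains a pure power $x_l^{s_j}$, this forces $b=0$ after all. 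Those $\lambda$-computations are exactly the step your one-variable projection cannot see, and they are what closes the gap you have left open.
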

\begin{proof}
For $i=1,\ldots,k,$ let $\bar{\phi}(x_i+I)=P_i+I^\prime$ with $\deg P_i=\deg x_i.$  
Let $\phi:\Q[x_1,\ldots,x_k] \rightarrow \Q[y_1,\ldots,y_l]$ be a graded algebra homomorphism defined as 
$\phi(x_i)=P_i $. By Theorem \ref{vanishing of y_l}, it suffices to show that $(P_1,\ldots,P_k) \subseteq (y_1,\ldots,y_{l-1}).$ Let $\psi:\Q[y_1,\ldots,y_{l}] \rightarrow \Q[y_{l-1},y_l]$ be a graded algebra homomorphism defined as $\psi(y_i)=0$ for $i=1,\ldots,l-2$ and $\psi(y_{i})=y_{i}$ for $i=l-1,l.$ 
Let $h=\psi \circ \phi.$ Since $k<2(l-1)$, $P_i$ does not contain monomials of the form $y_{l-1}^{m_1}y_l^{m_2}$ with $m_1+m_2 > 1.$  Hence $h(x_i)=0$ for $i=1,\ldots,l-2,l+1,\ldots,k.$ Let $h(x_{l-1})=a y_{l-1}$ and $h(x_l)=by_l$ for some $a,b \in \Q.$ Since 
$\deg R_i<2(n-l+1)$ for $i=1,\ldots, k-l$ and $h(R_i)=\psi(\phi(R_i))\in (\psi(R_1^\prime),\ldots,\psi(R_l^\prime)), $ $h(R_i)=0$ for $i=1,\ldots,k-l.$ 
Since $n-k+1 \geq k(k-1)+(k-1)+k \geq l(l-1)+(l-1)+l$, by Proposition \ref{proposition of gh}, there exist integers $r_1\geq 1$ and $s_1 \geq 1 $ 
such that $n-k+1=r_1(l-1)+s_1l. $ Hence comparing the coefficients 
of $y_{l-1}^{r_1}y_l^{s_1}$ in $h(R_1),$ we get $a^{r_1}b^{s_1}=0.$\\
Suppose $b\neq 0.$ Then $a=0.$ This implies
\[
 h(R_i)=\begin{cases}
         c_iy_l^{m_i}\mbox{ for some }c_i\in \Q\mbox{ and }m_i \in \N&\hspace{0.5in}\mbox{if }l\mbox{ divides }(n-k+i)\\
         0& \hspace{0.5in}\mbox{otherwise}. 
        \end{cases}
\]
Let $Q_i=\psi(R_i^\prime).$ Thus 
 \begin{eqnarray}\label{equation for lmi}
 \label{equation for lmi in case}h(R_{k-l+i}) &=& \lambda_i Q_i \mbox{ for }1 \leq i<l \mbox{ and }\\
\label{equation for lmk in case}h(R_k)&=& \lambda_l Q_l + \lambda_l^\prime y_{l-1} Q_1.
\end{eqnarray} 
Since $n-l+i  \geq l(l-1)+(l-1)+l$, by Proposition \ref{proposition of gh}, there exist integers $r_2\geq 1$ and $s_2 \geq 1 $ 
such that $n-l+i=r_2(l-1)+s_2l. $ Hence comparing the coefficients 
of $y_{l-1}^{r_2}y_l^{s_2}$ in Equation (\ref{equation for lmi in case}), we get 
$\lm_i=0$ for $1 \leq i<l.$ Thus $h(R_{k-l+i})=0$ for $1 \leq i<l.$ 
Let $h(R_k)=cy_l^s$. Then $n=sl=l(l-1)+(s-l+1)l=2l(l-1)+(s-2l+2)l.$  
 Since $sl=n \geq 2l^2-1 \geq 2l^2-2l,$ $s \geq 2l-2.$ 
Thus $s-2l+2\geq0$ which implies that $s-l+1\geq0.$  Hence comparing the coefficients of 
$y_{l-1}^ly_l^{s-l+1}$ and $y_{l-1}^{2l}y_l^{s-2l+2}$ in Equation (\ref{equation for lmk in case}), 
we get 
\begin{eqnarray} 
\label{equation1 for lmi in case} 
\frac{(s+1)!}{l!(s-l+1)!}\lm_l - \frac{s!}{(l-1)!(s-l+1)!}\lm_l^\prime&=&0\\
\label{equation2 for lmi in case} 
\frac{(s+2)!}{(2l)!(s-2l+2)!}\lm_l -\frac{(s+1)!}{(2l-1)!(s-2l+2)!}\lm_l^\prime&=&0.
\end{eqnarray}
Solving Equation (\ref{equation1 for lmi in case}) and (\ref{equation2 for lmi in case}) we get $\lm_l=\lm_l^\prime=0.$ Hence $h(R_k)=0.$ 
Note that $x_l^{s_j}$ occurs with nonzero coefficient in $R_{k-l+j}$ for some $1 \leq j\leq l,$ where
$s_j$ is a positive integer.
Hence comparing the coefficients of $y_l^{s_j}$ in $h(R_{k-l+j}),$ we get 
$b=0,$ a contradiction. Hence $b=0.$
\end{proof}

Now, as an application of Corollary \ref{cor}, we shall attain another sufficient condition for the triviality of a graded algebra homomorphism from $H^*(G_{n,k};\Q)$ to $H^*(G_{n,l};\Q)$.

\begin{discussionbox}
\label{discussion}
Let $\rho:\Q[y_1,\ldots,y_l]\to\Q[y_l]$ be a graded algebra homomorphism defined by $\rho(y_i)=0$ for $i=1,\ldots,l-1$ and $\rho(y_l)=y_l.$ Let $T_i=\rho(R_i^\prime)$. Then $\rho$ induces a graded algebra homomorphism 
\begin{eqnarray}
 \bar\rho:\frac{\Q[y_1,\ldots,y_l]}{(R_1^\prime,\ldots,R_l^\prime)}\to\frac{\Q[y_l]}{(T_1,\ldots,T_l)}.
\end{eqnarray}
Let $\bar\phi:H^*(G_{n,k};\Q)\to H^*(G_{n,l};\Q),$ $n \geq 3k^2-2$, be a graded algebra homomorphism defined by  $\bar\phi(x_i+I)=P_i+I^\prime$, where $\deg P_i=\deg x_i.$ Therefore, we get the following graded algebra homomorphism
\begin{eqnarray}\label{solution}
 \bar\rho\circ\bar\phi:\frac{\Q[x_1,\ldots,x_k]}{(R_1,\ldots,R_k)}\to\frac{\Q[y_l]}{(T_1,\ldots,T_l)}.
\end{eqnarray}
Therefore, in order to show that $\bar \phi$ is trivial, by Corollary \ref{cor}, it suffices to show that $(P_1,\ldots,P_k)\subset(y_1,\ldots,y_{l-1})$ 
which is equivalent to the triviality of the map $\bar\rho\circ\bar\phi.$ \\
Let $k=el+f$ and $n=e_1l+f_1$ for some integers $e,e_1,f,f_1$, where $0\leq f,f_1<l.$ 
Note that, if $j$ is not a multiple of $l$ then $\bar\rho\circ\bar\phi(x_j+\mathcal I_{n,k})=0.$ Let $\bar\rho\circ\bar\phi(x_{il}+\mathcal I_{n,k})=\tau_iy_l^i+(T_1,\ldots,T_l)$ for $i=1,\ldots,e.$ Also, 
\[
 T_i=\begin{cases}
  (-1)^{e_1} y_l^{e_1}& \mbox{ if }i=l-f_1\\
  0 & \mbox{ otherwise }.
 \end{cases}
\]
Let $R_{t_1},\ldots,R_{t_s}$ ($s=e$ or $e+1$) be those $R_i$'s which have degrees that are multiples of $2l.$ 
Let $\deg R_{t_j}=2lq_j$, for $j=1,\ldots,s,$ for some integers $q_j.$ Then for $j=1,\ldots,s,$
\[
 q_j=\begin{cases}
     e_1-e+j & \mbox{ if }s=e\\
     e_1-e+(j-1) & \mbox{ if }s=e+1.
     \end{cases}
\]
\noindent
Let $S_j=\bar \rho \circ \bar \phi(R_{t_j}).$ Then
\begin{eqnarray*}\label{equation for Sj}
 S_j=\left(\underset{{\bf n}\in\mathbb{N}^e,{\wt {\bf n}}=q_j}{\sum}(-1)^{|{\bf n}|}c_{\bf n}\tau^{\bf n}\right)y_l^{q_j}+(T_1,\ldots,T_l).
\end{eqnarray*}
Let $S_j^\prime:=\underset{{\bf n}\in\mathbb{N}^e,{\wt {\bf n}}=q_j}{\sum}(-1)^{|{\bf n}|}c_{\bf n}\tau^{\bf n}.$
Since $S_j=0$ for $j\neq s,$ we get 
\begin{eqnarray}\label{eqn}
 S_j^\prime=0 \mbox{ for }j\neq s.
\end{eqnarray}
Define a graded algebra homomorphism $\theta:\Q[x_1,\ldots,x_e]\to\Q[y]$  by $\theta(x_i)=\tau_iy^i$, where $\deg y=2.$
Let $R_j^\prime$ be the part of degree $2(e_1-e+j)$ of the formal inverse of $c=1+x_1+\cdots+x_e.$ Thus, $\mathcal{I}_{e_1,e}=(R_1^\prime,\ldots,R_e^\prime).$ Now, if $s=e,$ then 
$$\theta(R_j^\prime)=\theta\left(\underset{{\bf n}\in\mathbb{N}^e,{\wt {\bf n}}=q_j}{\sum}(-1)^{|{\bf n}|}c_{\bf n}{\bf x}^{\bf n}\right)=\left(\underset{{\bf n}\in\mathbb{N}^e,{\wt {\bf n}}=q_j}{\sum}(-1)^{|{\bf n}|}c_{\bf n}\tau^{\bf n}\right)y^{q_j}=S_j^\prime y^{q_j}=0$$
for $j=1,\ldots,e-1.$ Then $\theta$ induces a graded algebra homomorphism from $H^*(G_{e_1,e};\Q)$ to $H^*(G_{e_1,1};\Q).$ Therefore, if any graded algebra homomorphism from $H^*(G_{e_1,e};\Q)$ to $H^*(G_{e_1,1};\Q)$ 
is trivial, then the homomorphism in (\ref{solution}) is trivial. 
\end{discussionbox}

For $s=e+1,$ we obtain the following theorem. 
\begin{theorem}\label{2nd theorem part2}
 Let $1<l<k,$ $k=el+f$ and $n=e_1l+f_1$ for some integers $e,f,e_1,f_1$ such that $0\leq f,f_1<l.$ Let  $\bar\phi:H^*(G_{n,k};\Q)\to H^*(G_{n,l};\Q)$ be a graded algebra homomorphism. Then 
$\bar\phi$ is trivial if $f> f_1 $ and $ n\geq 3k^2-2.$  
\end{theorem}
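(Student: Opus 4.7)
The plan is to complete the reduction carried out in Discussion \ref{discussion}. By Corollary \ref{cor}, which applies since $n\geq 3k^2-2$, it suffices to show that $(P_1,\ldots,P_k)\subseteq(y_1,\ldots,y_{l-1})$, and this is equivalent to the triviality of the composition $\bar\rho\circ\bar\phi$, i.e., to the vanishing of the scalars $\tau_1,\ldots,\tau_e$.

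The first observation is that the hypothesis $f>f_1$ is exactly what forces $s=e+1$. Indeed, $\deg R_i=2(n-k+i)$ is a multiple of $2l$ precisely when $i\equiv f-f_1\pmod{l}$. With $f>f_1$, the residue $f-f_1$ lies in $\{1,\ldots,l-1\}$, and the admissible indices inside $\{1,\ldots,k\}$ are $f-f_1,\,f-f_1+l,\ldots,f-f_1+el$, exactly $e+1$ values. Hence $q_j=e_1-e+(j-1)$ for $j=1,\ldots,e+1$, and equation \eqref{eqn} yields $S_j^\prime=0$ for $j=1,\ldots,e$.

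The crux is then to recognize these $e$ vanishing relations as the defining relations of a Grassmannian cohomology. By \eqref{expression of R_i}, the ideal $\mathcal I_{e_1-1,e}$ is generated by
\[
\widetilde R_j:=\sum_{{\bf n}\in\N^e,\,\wt{\bf n}=(e_1-1)-e+j}(-1)^{|{\bf n}|}c_{\bf n}{\bf x}^{\bf n},\qquad j=1,\ldots,e,
\]
and $(e_1-1)-e+j$ coincides with $q_j$ in this range. Hence the graded algebra homomorphism $\theta:\Q[x_1,\ldots,x_e]\to\Q[y]$ defined by $\theta(x_i)=\tau_i y^i$ satisfies $\theta(\widetilde R_j)=S_j^\prime y^{q_j}=0$ for $j=1,\ldots,e$. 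The hypothesis $n\geq 3k^2-2$ together with $f>f_1$ easily forces $e_1-1\geq e$, so $\mathcal I_{e_1-1,e}$ is a bona fide relation ideal of $H^*(G_{e_1-1,e};\Q)$, and Lemma \ref{vanishing of Ri} (applied to $\theta$, with source $\Q[x_1,\ldots,x_e]$ and single-variable target $\Q[y]$) concludes at once that $\theta$ is trivial, whence $\tau_i=0$ for every $i=1,\ldots,e$.

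Combining this with Corollary \ref{cor} finishes the proof. The only nonroutine point is the combinatorial recognition that the condition $f>f_1$ contributes one extra $R_i$ of degree divisible by $2l$, namely $R_{f-f_1}$ of degree $2(e_1-e)$, sitting one block below the smallest-degree generator of $\mathcal I_{e_1,e}$. This extra equation $S_1^\prime=0$ enlarges the ideal annihilated by $\theta$ from $\mathcal I_{e_1,e}$ (the $s=e$ picture alluded to in the discussion box, where one can at best reduce to triviality of maps between $H^*(G_{e_1,e};\Q)$ and $H^*(G_{e_1,1};\Q)$) down to the strictly larger $\mathcal I_{e_1-1,e}$, which is enough to invoke Lemma \ref{vanishing of Ri} directly without any further degree analysis.
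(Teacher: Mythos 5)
Your proposal is correct and follows essentially the same route as the paper: both reduce via Discussion \ref{discussion} and Corollary \ref{cor} to the vanishing of the $\tau_i$, observe that $f>f_1$ gives $s=e+1$ so that $S_j^\prime=0$ for $j=1,\ldots,e$, identify these relations with the generators of $\mathcal I_{e_1-1,e}$ of degrees $2(e_1-1-e+j)=2q_j$, and invoke Lemma \ref{vanishing of Ri} on $\theta$ to force $\tau_i=0$. Your explicit verification that the indices with $\deg R_i$ divisible by $2l$ are $f-f_1,\,f-f_1+l,\ldots,f-f_1+el$ and your remark that $e_1-1\geq e$ are welcome details that the paper leaves implicit.
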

\begin{proof}
Let the notations be as in Discussion \ref{discussion}. Since $f>f_1,$ $s=e+1.$ Hence, by Equation (\ref{eqn}),  
$S_j^\prime=0$ for $j=1,\ldots,e.$
Define a graded algebra homomorphism $\theta:\Q[x_1,\ldots,x_e]\to\Q[y]$ by $\theta(x_i)=\tau_iy^i$, where $\deg y=2.$ Let $\mathcal{I}_{e_1-1,e}=(R_1^{\prime\prime},\ldots,R_{e}^{\prime\prime}),$ where $R_j^{\prime\prime}$ is the part of degree $2(e_1-1-e+j)$ of the formal inverse of $c=1+x_1+\cdots+x_e.$ Then
$$ 
\theta(R_j^{\prime \prime})=S_j^\prime y^{e_1-1-e+j}=0
$$
for $j=1,\ldots,e.$
Thus by Lemma  \ref{vanishing of Ri}, $\theta$ is trivial, which implies that $\tau_i=0$ for all $i$. Hence 
$\bar\rho\circ\bar\phi$ is trivial. Therefore, by Corollary \ref{cor}, $\bar\phi$ is trivial. 
\end{proof}

\begin{remark}
Note that if $f\leq f_1$, then the number of $R_i$'s having degrees multiple of $2l$ is $e$, i.e. $s=e.$ 
Thus in order to show that $\bar \phi$ is trivial, from Discussion \ref{discussion}, it suffices to show that every graded algebra homomorphism from $H^*(G_{e_1,e};\Q)$ to $H^*(G_{e_1,1};\Q)$ is trivial.
But, from the existence of a nontrivial graded algebra homomorphism from $H^*(G_{e_1,e};\Q)$ to $H^*(G_{e_1,1};\Q)$, we cannot conclude that $\bar \phi$ is trivial.

\end{remark}

\section{ Proof of Theorem \ref{main theorem}}
At first, we recall a relation between the homotopy 
class of a map and the homomorphism it induces in cohomology with rational coefficients.  Familiarity with basic notions in rational homotopy theory has been assumed.  For further details, see \cite{FHT,griffiths-morgan}.\\
  Let $X$ be any simply connected finite CW complex and let $X_0$ denote its rationalization.   Denoting the minimal model of $X$ by 
$\mathcal{M}_X$, one has a bijection $[X_0,Y_0]\cong [\mathcal{M}_Y,\mathcal{M}_X], [h]\mapsto [\Phi_h],$ 
where on the left we have homotopy classes of continuous 
maps $X_0\to Y_0$ and on the right we have homotopy classes of differential graded commutative algebra
 homomorphisms of the minimal models $\mathcal{M}_Y\to \mathcal{M}_X.$
 In the case when $X=U(n)/(U(n_1)\ldots\times U(n_r))$ is a complex flag manifold (i.e. when $n=\sum_{i=1}^r{n_i}$), then $X$ is a K\"ahler manifold and hence is formal, that is there is a morphism of differential graded commutative algebras $\rho_X:\mathcal{M}_X\to H^*(X;\Q)$ which induces an isomorphism in cohomology, where $H^*(X;\Q)$ is endowed with the zero differential.  Moreover, it is known that when both $X$ and $Y$ are complex flag manifolds, any continuous map $h:X\to Y$ is formal, that is, the homotopy class of the morphism $h_0:X_0\to Y_0$ is determined by the graded $\Q$-algebra homomorphism $h^*:H^*(Y;\Q)\to H^*(X;\Q).$  The above discussion can be written as the following theorem.

\begin{theorem}\label{rigid}(\cite{glover-homer2}, Theorem 1.1)
Let $X,Y$ be complex flag manifolds.  Then $[h]\mapsto H^*(h;\mathbb{Q})$ establishes a bijection from $[X_0,Y_0]$ to the set of graded $\mathbb{Q}$-algebra 
homomorphisms   $Hom_{alg}(H^*(Y;\mathbb{Q}),H^*(X;\mathbb{Q}))$. 
\end{theorem}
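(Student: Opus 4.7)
The plan is to extract the bijection as a composite of two standard facts from rational homotopy theory: Sullivan's correspondence, which gives $[X_0,Y_0] \cong [\mathcal M_Y,\mathcal M_X]$ (homotopy classes of CDGA maps between minimal Sullivan models), and the observation that for complex flag manifolds the right-hand set collapses to $\mathrm{Hom}_{alg}(H^*(Y;\Q),H^*(X;\Q))$. The mention of formality in the preceding paragraph does most of the work, so the task really reduces to making this reduction explicit.

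First, I would exploit the explicit form of the minimal model. If $Z$ is a complex flag manifold, then $H^*(Z;\Q) = \Q[u_1,\dots,u_s]/(f_1,\dots,f_s)$ with the $u_i$ in even degrees and $(f_1,\dots,f_s)$ a regular sequence (a fact recalled in Section~\ref{Preli} for Grassmannians and valid in general for flag manifolds). The minimal Sullivan model is then the Koszul-type CDGA
\[
\mathcal M_Z = \bigl(\Lambda(u_1,\dots,u_s)\otimes \Lambda(v_1,\dots,v_s),\,d\bigr),\qquad du_i=0,\ dv_j=f_j,
\]
with $|v_j|=|f_j|-1$ odd. The canonical surjection $\rho_Z\colon\mathcal M_Z\to (H^*(Z;\Q),0)$ killing the $v_j$'s is a quasi-isomorphism, realizing formality.

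Second, I would show that $\rho_X$ induces a bijection $[\mathcal M_Y,\mathcal M_X] \xrightarrow{\cong} [\mathcal M_Y,H^*(X;\Q)]$: surjectivity follows because $\rho_X$ is a quasi-isomorphism of CDGAs and $\mathcal M_Y$ is a Sullivan algebra, so every CDGA map $\mathcal M_Y\to H^*(X;\Q)$ lifts through $\rho_X$ up to homotopy; injectivity is the analogous lifting for homotopies, viewed as maps out of $\mathcal M_Y\otimes \Lambda(t,dt)$. Then I would describe $[\mathcal M_Y,H^*(X;\Q)]$ concretely. Any CDGA map $\Phi\colon \mathcal M_Y\to H^*(X;\Q)$ must send the odd generators $v_j$ of $\mathcal M_Y$ to zero since $H^*(X;\Q)$ is concentrated in even degrees; compatibility $\Phi(dv_j)=0$ then forces $\Phi(f_j)=0$, so $\Phi$ factors through $H^*(Y;\Q)$ as an algebra map. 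Conversely, any algebra map $\phi\colon H^*(Y;\Q)\to H^*(X;\Q)$ defines $\Phi$ by $\Phi(u_i):=\phi([u_i])$ and $\Phi(v_j):=0$, and any two CDGA lifts of the same $\phi$ are homotopic via the standard simplicial homotopy obtained by linearly interpolating their values on the even generators (the obstruction to extending over odd generators vanishes because $\phi(f_j)=0$, providing the requisite primitives).

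The only genuinely delicate point, and the step I would expect to occupy most of the work, is verifying the homotopy uniqueness: two lifts $\Phi_0,\Phi_1\colon \mathcal M_Y\to H^*(X;\Q)$ of a fixed algebra map $\phi$ need to be joined by a CDGA homotopy $H\colon \mathcal M_Y\to H^*(X;\Q)\otimes\Lambda(t,dt)$. On even generators $u_i$, one can set $H(u_i)=(1-t)\Phi_0(u_i)+t\Phi_1(u_i)$; the obstruction to defining $H$ on the odd generators $v_j$ compatibly with $dv_j=f_j$ is that $(1-t)f_j(\Phi_0) + t f_j(\Phi_1)$ be exact in $H^*(X;\Q)\otimes\Lambda(t,dt)$, and this follows because both $f_j(\Phi_0)$ and $f_j(\Phi_1)$ already vanish in $H^*(X;\Q)$ and admit primitives in $\mathcal M_X$ that transport along the homotopy. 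This is the place where the formality of $X$ (equivalently, the Koszul-regular shape of its minimal model) is used essentially. Assembling steps gives the stated bijection $[h]\mapsto H^*(h;\Q)$.
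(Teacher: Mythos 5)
Your argument is essentially correct, but note that the paper does not actually prove this statement: Theorem \ref{rigid} is quoted from Glover--Homer, and the paragraph preceding it only assembles the ingredients (Sullivan's bijection $[X_0,Y_0]\cong[\mathcal{M}_Y,\mathcal{M}_X]$, formality of flag manifolds, and the quoted fact that every continuous map between flag manifolds is formal). What you have written is a self-contained proof along the same skeleton, replacing the black box ``all maps between flag manifolds are formal'' by the concrete observation that $H^*(X;\Q)$ is concentrated in even degrees, which forces every CDGA map $\mathcal{M}_Y\to (H^*(X;\Q),0)$ to kill the odd generators and hence to factor through $H^*(Y;\Q)$; that is a legitimate and arguably more elementary route. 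Two small corrections. First, the Koszul-type algebra $\bigl(\Lambda(u_1,\dots,u_s)\otimes\Lambda(v_1,\dots,v_s),\,dv_j=f_j\bigr)$ is a Sullivan model of a flag manifold but need not be \emph{minimal}: in Borel's presentation some relations (e.g.\ the first elementary symmetric polynomial for the full flag manifold) are linear in the chosen even generators, so the corresponding $dv_j$ is not decomposable. This is harmless, since the lifting lemma and Sullivan's correspondence only require a Sullivan (cofibrant) model, but you should not call it the minimal model without either eliminating the linear relations or restricting to Grassmannians, where $\deg R_j=2(n-k+j)>2k$ makes each $R_j$ decomposable. Second, the step you flag as the delicate one is in fact vacuous: a CDGA map $\Phi$ into the zero-differential algebra $(H^*(X;\Q),0)$ satisfies $\Phi(u_i)=H^*(\Phi)([u_i])$ and $\Phi(v_j)=0$ for degree reasons, so $\Phi$ is determined by, and equal to, any other lift of the same algebra homomorphism $\phi$; no interpolation or obstruction analysis is needed, and $[\mathcal{M}_Y,H^*(X;\Q)]$ is literally the set of algebra maps $H^*(Y;\Q)\to H^*(X;\Q)$. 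With these adjustments your proof is complete and consistent with the identification $[h]\mapsto H^*(h;\Q)$ claimed in the statement.
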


We now turn to the proof of Theorem \ref{main theorem}.

\begin{proof}[Proof of Theorem \ref{main theorem}]
Let $h:G_{n,l} \to G_{n,k}$ be a continuous map. By Theorem \ref{1st theorem} and Theorem \ref{2nd theorem part1}, $h^*$ is trivial in Cases $(i),(ii)(a)$. In Case $(ii)(b)$, $h^*$ satisfies the hypothesis of Theorem \ref{2nd theorem part2} with $f>0$, $f_1=0$. So, $h^*$ is trivial in Case $(ii)(b).$ Then by Theorem \ref{rigid}, $h_0$ is null-homotopic in all three cases.
\end{proof}
\begin{remark}\label{other result}
$(1)$ The above proof also establishes the following statement, which is a generalization of Case $(ii)(b)$ of Theorem \ref{main theorem}.\\
{\it Any continuous map $\phi:G_{n,l}\to G_{n,k}$ is rationally null homotopic if $1<l<k$, $f> f_1, n\geq 3k^2-2,$ where $k=el+f$ and $n=e_1l+f_1$ for some integers  $e,f,e_1,f_1$ such that $0\leq f,f_1<l.$}\\ 
$(2)$ The set of homotopy classes of continuous maps $[G_{n,l},G_{n,k}]$ is finite when $n,k,l$ are as in Theorem \ref{main theorem} and Remark \ref{other result}$(1)$. 
This follows from the finiteness of the set of homotopy classes of continuous maps $h:X\to Y$ having the same rationalization $h_0:X_0\to Y_0$.  See \cite[\S 12]{sullivan}. 
\end{remark}

As an application of Theorem \ref{main theorem}, we get the following {\it invariant subspace theorem}. If $y\in G_{n,k}$ and $x\in G_{n,l}$ and $k<l$, then it makes sense to write $y\subset x$.
\begin{theorem}\label{subspace invariant theorem}
Let $h:G_{n,l}\to G_{n,k}$ be any continuous map where $n,k,l$ are as in Theorem \ref{main theorem}$(i)$. Then there exists an element $x\in G_{n,l}$ such that $h(x)\subset x.$
\end{theorem}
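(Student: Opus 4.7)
The plan is a classical Euler-class / obstruction argument, turning the condition ``$h(x)\subset x$'' into the vanishing of a section of an explicit vector bundle over $G_{n,l}$. Over $G_{n,l}$ introduce the rank $k(n-l)$ complex vector bundle
\[
E:=(h^{*}\gamma_{n,k})^{*}\otimes\gamma_{n,l}^{\bot},
\]
together with the canonical section $s$ of $E$: at $x\in G_{n,l}$ the fiber of $h^{*}\gamma_{n,k}$ is the $k$-plane $h(x)\subset\bc^{n}$, and $s(x)$ is the composition $h(x)\hookrightarrow\bc^{n}\twoheadrightarrow x^{\bot}$, the second arrow being orthogonal projection. Since $\ker(\bc^{n}\to x^{\bot})=x$, one has $s(x)=0$ if and only if $h(x)\subset x$, so it suffices to show that $s$ must have a zero.

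For this I would reduce the problem to the non-vanishing of the top Chern class $c_{k(n-l)}(E)\in H^{*}(G_{n,l};\Q)$: if $s$ were nowhere zero it would span a trivialized complex line subbundle $L\subset E$, and the Whitney sum formula applied to $0\to L\to E\to E/L\to 0$ forces $c_{k(n-l)}(E)=c_{k(n-l)}(E/L)=0$. By Theorem \ref{main theorem}(i), $h$ is rationally null homotopic, so $h^{*}$ is trivial on positive-degree rational cohomology and hence $c_{r}(h^{*}\gamma_{n,k})=0$ in $H^{*}(G_{n,l};\Q)$ for every $r\ge 1$. Writing $a_{1},\dots,a_{k}$ and $b_{1},\dots,b_{n-l}$ for the formal Chern roots of $h^{*}\gamma_{n,k}$ and $\gamma_{n,l}^{\bot}$, the splitting principle gives $c_{k(n-l)}(E)=\prod_{i,j}(b_{j}-a_{i})$. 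This expression is symmetric in the $a_{i}$ and hence rewrites uniquely as a polynomial $F$ in the Chern classes $c_{r}(h^{*}\gamma_{n,k})$ and $c_{s}(\gamma_{n,l}^{\bot})$; setting the first block of variables to zero in $F$ equals, by the fundamental theorem of symmetric functions, the result of formally setting $a_{i}=0$ in $\prod_{i,j}(b_{j}-a_{i})$, namely $\prod_{j}b_{j}^{k}=c_{n-l}(\gamma_{n,l}^{\bot})^{k}$. Hence $c_{k(n-l)}(E)=c_{n-l}(\gamma_{n,l}^{\bot})^{k}$ in $H^{*}(G_{n,l};\Q)$.

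To finish I would identify $c_{n-l}(\gamma_{n,l}^{\bot})$ with the special Schubert class $\sigma_{(n-l)}$ and invoke Pieri's rule: adding a horizontal strip of $n-l$ boxes to the rectangular partition $((n-l)^{j})$, while remaining inside the $l\times(n-l)$ rectangle, forces every added box into the $(j+1)$-st row, so $\sigma_{(n-l)}\cdot\sigma_{((n-l)^{j})}=\sigma_{((n-l)^{j+1})}$. Iterating gives $\sigma_{(n-l)}^{k}=\sigma_{((n-l)^{k})}$ whenever $k\le l$; since $k<l$ this is a basis element of $H^{*}(G_{n,l};\Q)$ and in particular nonzero. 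Thus $c_{k(n-l)}(E)\ne 0$, the section $s$ must vanish at some $x\in G_{n,l}$, and one obtains $h(x)\subset x$ as required.

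The step requiring the most care is the Chern-root substitution in the middle paragraph, where one must rigorously justify that setting the $a_{i}$ to zero yields the correct simplification in the quotient cohomology algebra; everything else (the obstruction-theoretic meaning of the top Chern class and the Pieri computation) is standard.
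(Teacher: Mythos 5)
Your argument is correct, and it is essentially the paper's own proof: the paper simply cites the Euler-class argument of \cite[Theorem 1.3]{cs}, which constructs the same bundle $\hom(h^{*}\gamma_{n,k},\gamma_{n,l}^{\bot})$ with the same tautological section, uses the triviality of $h^{*}$ on positive-degree rational cohomology to reduce the top Chern class to $c_{n-l}(\gamma_{n,l}^{\bot})^{k}$, and observes that this Schubert class is nonzero since $k<l$. Your splitting-principle justification of the reduction and the Pieri computation fill in the ``obvious changes'' correctly.
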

\begin{proof}
 The proof of this theorem is analogous to that of \cite[Theorem 1.3]{cs} with obvious changes.
\end{proof}

\section{Examples}
In this section we provide a few examples, which do not follow from Theorems \ref{1st theorem}, \ref{2nd theorem part1}, or \ref{2nd theorem part2}, where the graded algebra homomorphisms between cohomology algebras of distinct Grassmann manifolds are trivial. We also give examples of nontrivial graded algebra homomorphisms from $H^*(G_{n,2};\Q)$ to $H^*(G_{n,1};\Q)$ even if $n$ is not even (see Example \ref{nontrivial map} for $n$ even).

In the following proposition we consider possible graded algebra homomorphisms from $H^*(G_{2m,3};\Q)$ to $H^*(G_{2m,2};\Q).$ Note that using Theorem \ref{2nd theorem part2}, we can conclude that any such homomorphism is trivial provided $m\geq 13.$ By direct calculations, we show that any such homomorphism is trivial for $m\geq 3.$  

\begin{proposition}
 Let $\bar\phi:H^*(G_{2m,3};\Q) \rightarrow H^*(G_{2m,2};\Q)$ be a graded algebra homomorphism. Then 
 $\bar\phi$ is trivial if $m \geq 3.$ 
\end{proposition}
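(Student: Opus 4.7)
The plan is to enumerate all graded algebra homomorphisms $\bar\phi:H^*(G_{2m,3};\Q)\to H^*(G_{2m,2};\Q)$ by grading and use the constraints $\phi(R_j)\in(R_1',R_2')$ to force every free parameter to vanish. By degree, $\bar\phi(x_1)=ay_1$, $\bar\phi(x_2)=by_1^2+cy_2$, and $\bar\phi(x_3)=dy_1^3+ey_1y_2$ for scalars $a,b,c,d,e\in\Q$; I lift to $\phi:\Q[x_1,x_2,x_3]\to\Q[y_1,y_2]$.

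First I would apply the projection argument of Discussion \ref{discussion} with $\rho:\Q[y_1,y_2]\to\Q[y_2]$, $y_1\mapsto 0$, $y_2\mapsto y_2$. One checks $\rho(R_1')=0$ and $\rho(R_2')=(-1)^m y_2^m$, so $\bar\rho\circ\bar\phi$ takes values in $\Q[y_2]/(y_2^m)$; applied to $R_1$, only the monomial $(-1)^{m-1}x_2^{m-1}$ contributes, giving $(-1)^{m-1}c^{m-1}y_2^{m-1}\equiv 0\pmod{y_2^m}$, hence $c=0$. Now the images lie in $(y_1)$, so Corollary \ref{cor} disposes of $m\geq 13$. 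For $3\leq m\leq 12$ I split into the cases $e=0$ and $e\neq 0$.

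When $e=0$, $\phi$ takes values in $\Q[y_1]$, so each $\phi(R_j)=\alpha_jy_1^{2m-3+j}$ for some $\alpha_j\in\Q$. The degree forces $\alpha_1=0$; the ansatz $\phi(R_2)=\lambda R_1'$ combined with the presence of a nonzero $y_2$-monomial in $R_1'$ yields $\lambda=\alpha_2=0$; and writing $\phi(R_3)=\mu R_2'+\nu y_1R_1'$ and matching the coefficients of $y_1^{2m-2}y_2$ and $y_1^{2m-4}y_2^2$ gives a $2\times 2$ linear system in $\mu,\nu$ of determinant $-m(2m-3)\neq 0$, so $\mu=\nu=\alpha_3=0$. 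Lemma \ref{vanishing of Ri} concludes $\phi$ is trivial.

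When $e\neq 0$, I set $D=d+ey_2/y_1^2$, whence $\phi(x_3)=Dy_1^3$ and $\phi(R_1)=y_1^{2m-2}\widetilde R_{2m-2}(a,b,D)$, where $\widetilde R_N$ denotes the weight-$N$ component of $(1+x_1+x_2+x_3)^{-1}$. Degree forces $\phi(R_1)=0$, and Taylor-expanding $\widetilde R_{2m-2}(a,b,D)$ around $D=d$ (so the coefficient of $(y_2/y_1^2)^s$ is $e^s\widetilde R_{2m-2}^{(s)}(a,b,d)/s!$) together with $e\neq 0$ and $\deg_D\widetilde R_{2m-2}\leq m-1$ forces $\widetilde R_{2m-2}(a,b,D)\equiv 0$ as a polynomial in $D$. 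A case analysis on the top and subleading coefficients in $D$, according to $m\bmod 3$, then forces $a=b=0$---the case $m\equiv 1\pmod 3$ is ruled out immediately since the top coefficient is a nonzero constant. With $a=b=0$, only the pure $x_3^{n_3}$-monomial of $R_j$ contributes to $\phi(R_j)$, and this occurs for exactly one $j\in\{2,3\}$ (determined by $m\bmod 3$); matching the resulting $(dy_1^3+ey_1y_2)^{n_3}$ against the corresponding ideal element and comparing coefficients of $y_1^{2m-2s}y_2^s$ for $s$ just above $n_3$ (using the nonconstancy of $(2m-2s)/(2m-s)$ in $s$) yields $e=0$, a contradiction. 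The main obstacle is this last substep: the subleading coefficient of $\widetilde R_{2m-2}$ needs a factorization of the shape $k^*(k^*+1)(k^*+2)(3k^*+4)(2k^*+3)$ (with $k^*=\lfloor(2m-2)/3\rfloor$), and a few small-$m$ boundary cases (notably $m=3$) where the useful range of $s$ is empty must be verified by explicit linear algebra.
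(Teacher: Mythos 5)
Your proposal is correct in outline and the key computations you cite check out (the $2\times 2$ determinant in the $e=0$ case is indeed $-m(2m-3)$, and the subleading coefficient of $\widetilde R_{2m-2}$ in $D$ for $2m-2=3q+2$ does factor as $\tfrac{q(q+1)(q+2)(2q+3)(3q+4)}{120}a^5$ after substituting the top-coefficient relation $b=\tfrac{q+2}{2}a^2$), but it is organized quite differently from the paper's proof. The paper also first kills $a_{01}$ (your $c$) via the pure $y_2^{m-1}$ coefficient of $\phi(R_1)=0$, and then splits on $2m-2\bmod 3$; however, instead of your ``generic variable'' substitution $D=d+ey_2/y_1^2$ forcing $a=b=0$ from the vanishing of the whole polynomial $\widetilde R_{2m-2}(a,b,D)$, the paper targets $a_{11}$ (your $e$) directly by comparing single monomial coefficients: in the case $3\mid 2m-2$ the coefficient of $(y_1y_2)^q$ in $\phi(R_1)$ gives $a_{11}^q=0$ at once, and in the case $2m-2\equiv 2\pmod 3$ the paper writes $\phi(R_2)=\lambda R_1'$, gets $\lambda=0$ from the coefficient of $y_1y_2^{m-1}$ (which cannot occur on the left since $m\ge 3$), and then reads off $a_{11}^{q+1}=0$ from the coefficient of $(y_1y_2)^{q+1}$ --- a two-line argument that entirely avoids your quintic-in-$a$ computation and the factorization $(2q+3)(3q+4)$. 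Only in the case $2m-2\equiv 1\pmod 3$ does the paper need $a_{10}=a_{20}=0$ first, and there the top two $D$-coefficients you describe coincide with the paper's comparisons of $y_1^{q+1}y_2^q$ and $y_1^{q+3}y_2^{q-1}$. Your use of Corollary \ref{cor} to dispose of $m\ge 13$ is a legitimate shortcut the paper mentions but does not rely on; the paper's argument instead covers all $m\ge 4$ uniformly and, exactly as you anticipate, treats $m=3$ by a separate explicit computation (Example \ref{lastexample}(1)). The one place where your write-up is genuinely incomplete rather than just differently organized is that final substep of the $e\ne 0$ branch (deriving the contradiction from $(dy_1^3+ey_1y_2)^{n_3}\in\mathcal I_{2m,2}$, including the $m=3$ boundary case where $m-1=n_3$): you correctly flag it, and it does work out, but as written it is a sketch that still needs the explicit linear algebra you defer.
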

\begin{proof}
In Example \ref{lastexample} (1), we shall show that any graded algebra homomorphism from $H^*(G_{6,3};\Q)$ to $H^*(G_{6,2};\Q)$ is trivial. Hence we assume that $m \geq 4.$

\noindent
Let $I=\mathcal I_{2m,3}=(R_1,R_2,R_3), I^\prime=\mathcal I_{2m,2}=(R_1^\prime,R_2^\prime),$ $H^*(G_{2m,3};\Q)=\frac{\Q[x_1,x_2,x_3]}{I}$ and 
$H^*(G_{2m,2};\Q)=\frac{\Q[y_1,y_2]}{I^\prime}.$ 
Let $\bar\phi(x_1+I)=a_{10}y_1+I^\prime, \bar\phi(x_2+I)=a_{20}y_1^2+a_{01}y_2+I^\prime$ and 
$\bar\phi(x_3+I)=a_{30}y_1^3+a_{11}y_1y_2+I^\prime.$ 
Define $\phi:\Q[x_1,x_2,x_3] \rightarrow \Q[y_1,y_2] $ as 
$\phi(x_1)=a_{10}y_1, \phi(x_2)=a_{20}y_1^2+a_{01}y_2$ and 
$\phi(x_3)=a_{30}y_1^3+a_{11}y_1y_2.$ It suffices to show that $\phi$ 
is trivial. Since $\phi(I) \subseteq I^\prime$ and 
$\deg R_1=4m-4,$ $\phi(R_1)=0. $ Hence comparing the coefficients of $y_2^{m-1}$ in 
$\phi(R_1),$ we get $a_{01}=0.$ \\
\underline{Case 1:} $2m-2 =3q$ for some $q \in \N.$\\
In this case, $x_3^q$ occurs in $R_1$ with nonzero coefficient. Hence 
comparing the coefficients of $(y_1y_2)^q$ in $\phi(R_1),$ we get 
$a_{11}=0.$ Thus $(\phi(x_1),\phi(x_2),\phi(x_3)) \subseteq (y_1).$ Next we shall prove that 
$\phi(R_2)=\phi(R_3)=0.$ Suppose $\phi(R_2) \neq 0.$ Since $\phi(R_2) \in I^\prime,$ 
by Corollary \ref{existence of xl}, $\phi(R_2)$ contains a monomial $y_1^{m_1}y_2^{m_2}$ 
with nonzero coefficient and $m_2>0,$ which is a contradiction. Hence $\phi(R_2)=0.$ 
Similarly, $\phi(R_3)=0.$ Hence by Lemma \ref{vanishing of Ri}, $\phi$ is trivial.\\
\underline{Case 2:} $2m-2=3q+1$ for some $q \in \N.$\\
In this case, $x_1x_3^q$ occurs in $R_1$ with nonzero coefficient. Hence 
comparing the coefficients of $y_1^{q+1}y_2^q,$ we get $a_{10}a_{11}^q=0.$ If 
$a_{11}=0$ then argument as in Case $1$ gives the result. Suppose 
$a_{11}\neq0.$ Then $a_{10}=0.$ Now, comparing the coefficients of $y_1^{q+3}y_2^{q-1}$ in $\phi(R_1),$ we get 
$a_{11}^{q-1}a_{20}^2=0.$ Hence $a_{20}=0.$ Thus $\phi(x_1)=\phi(x_2)=0.$ 
Since $\deg R_2$ is not a multiple of $6$, $\phi(R_2)=0.$ 
Let 
\begin{eqnarray} \label{equation for R3}
 \phi(R_3)=\lm y_1 R_1^\prime+\lm^\prime R_2^\prime.
\end{eqnarray}
Since $\phi(R_3)=\phi((-1)^{q+1}x_3^{q+1})=(-1)^{q+1}(a_{30}y_1^3+a_{11}y_1y_2)^{q+1},$ 
comparing the coefficients of $y_2^m,$ in Equation (\ref{equation for R3}), we get 
$\lm^\prime =0.$ Then comparing the coefficients of $y_1^2y_2^{m-1},$ in Equation 
(\ref{equation for R3}), we get $\lm=0.$ Thus $\phi(R_3)=0.$ Hence by Lemma \ref{vanishing of Ri}, 
$\phi$ is trivial.\\
\underline{Case 3:} $2m-2=3q+2$ for some $q \in \N.$\\
Let 
\begin{eqnarray} \label{equation for R2}
 \phi(R_2)=\lm R_1^\prime.
 \end{eqnarray} 
Then comparing the coefficients of $y_1y_2^{m-1}$ in Equation (\ref{equation for R2}), we get $\lm=0.$ Hence $\phi(R_2)=0.$ 
Therefore comparing the coefficients of $(y_1y_2)^{q+1}$ in $\phi(R_2),$ we get 
$a_{11}=0.$ Then argument as in Case $1$ completes the proof.
\end{proof}

In Examples \ref{lastexample} $(1)$ and $(2)$, we show that all graded algebra homomorphisms from $H^*(G_{n,3};\Q)$ to $H^*(G_{n,2},\Q)$
are trivial, if $n$ is $6$ and $7$ respectively. 
In Example \ref{lastexample} $(3)$, we illustrate that the existence of a non-trivial graded algebra homomorphism 
between the cohomology algebras of distinct complex Grassmann manifolds depends upon the coefficient field. 

\begin{example}\label{lastexample}
$(1)$ Let $\bar\phi:H^*(G_{6,3};\Q) \rightarrow H^*(G_{6,2},\Q)$ be a graded algebra homomorphism. Then 
 $\bar\phi$ is trivial.\\
 {\it Proof:} Let $I=\mathcal I_{6,3}, I^\prime=\mathcal I_{6,2}.$ Let $\bar\phi(x_1+I)=a_{10}y_1+I^\prime, \bar\phi(x_2+I)=a_{20}y_1^2+a_{01}y_2+I^\prime$ and 
$\bar\phi(x_3+I)=a_{30}y_1^3+a_{11}y_1y_2+I^\prime.$ 
Define $\phi:\Q[x_1,x_2,x_3] \rightarrow \Q[y_1,y_2] $ by 
$\phi(x_1)=a_{10}y_1, \phi(x_2)=a_{20}y_1^2+a_{01}y_2$ and 
$\phi(x_3)=a_{30}y_1^3+a_{11}y_1y_2.$ It suffices to show that $\phi$ 
is trivial. Since $\phi(R_1)=\phi(x_1^4 -3 x_1^2x_2+2 x_1x_3+x_2^2)=0,$ 
comparing the coefficients of $y_2^2 ,$ we get $a_{01}=0.$ Hence comparing the coefficients 
of $y_1^2y_2$ in $\phi(R_1),$ we get $a_{10}a_{11}=0. $ Suppose $a_{11}=0.$ 
Let $\phi(R_2)=\lm R_1^\prime.$ Then comparing the coefficients of $y_1y_2^2,$ 
we get $\lm=0.$ Thus $\phi(R_2)=0.$ Let $\phi(R_3)=\lm_1 y_1 R_1^\prime+\lm_2 R_2^\prime.$ 
Hence comparing the coefficients of $y_2^3,$ we get $\lm_2=0.$ Then comparing the 
coefficients of $y_1^2y_2^2,$ we get $\lm_1=0.$ Thus $\phi(R_3)=0.$ Therefore by 
Lemma \ref{vanishing of Ri}, $\phi$ is trivial. \\
Suppose $a_{11} \neq 0.$ Then $a_{10}=0.$ Let $\phi(R_2)=\lm R_1^\prime.$ Hence 
comparing the coefficients of $y_1y_2^2$, we get $ \lm=0.$ Thus $\phi(R_2)=0.$ 
Since $\phi(x_1)=a_{10}y_1=0,$ $\phi(R_2)=\phi(2x_2x_3).$ This implies that 
$\phi(x_2)\phi(x_3)=0.$ Comparing the coefficients of $y_1^3y_2,$ we get $a_{20}=0.$ 
Thus $\phi(x_2)=0.$ Hence $\phi(R_3)=\phi(x_3^2).$ 
Let $\phi(R_3)=\lm_1 y_1 R_1^\prime+\lm_2 R_2^\prime.$ Then comparing the coefficients 
of $y_2^3,$ we get $\lm_2=0.$ Therefore $\phi(R_3)=\phi(x_3^2)= \lm_1 y_1 R_1^\prime.$ 
Thus $(a_{30}y_1^3+a_{11}y_1y_2)^2=\lm_1 y_1(-y_1^5+4y_1^3y_2-3y_1y_2^2).$ 
Comparing the coefficients and solving, we get $\lm_1=0.$ Thus $\phi(R_3)=0.$ 
Therefore by Lemma \ref{vanishing of Ri}, $\phi$ is trivial.\\
$(2)$
 Let $\bar\phi:H^*(G_{7,3};\Q) \rightarrow H^*(G_{7,2},\Q)$ be a graded algebra homomorphism. Then 
 $\bar\phi$ is trivial.\\
 {\it Proof:} Let $I=\mathcal I_{7,3},I^\prime=\mathcal I_{7,2}.$ Let $\bar\phi(x_1+I)=ay_1+I^\prime, \bar\phi(x_2+I)=by_1^2+cy_2+I^\prime$ and 
$\bar\phi(x_3+I)=dy_1^3+ey_1y_2+I^\prime.$ 
Define $\phi:\Q[x_1,x_2,x_3] \rightarrow \Q[y_1,y_2] $ by 
$\phi(x_1)=ay_1, \phi(x_2)=by_1^2+cy_2$ and 
$\phi(x_3)=dy_1^3+ey_1y_2.$ It suffices to show that $\phi$ 
is trivial. Since $\phi(R_1)=0,$ comparing the coefficients of $y_1y_2^2,$ we get 
$c(2e-3ac)=0.$ We claim that $c=0.$ Suppose $c \neq 0.$ Then $2e=3ac.$ Let $\phi(R_2)=\lm R_1^\prime.$ Comparing the coefficients 
of $y_2^3$ and $y_1^2y_2^2,$ we get $c^3=\lm$ and $6a^2c^2-3ace-3bc^2+e^2=6 \lm=6 c^3.$ 
Using $2e=3ac,$ we get $a^2=\frac{4}{5}(b+2c).$ Comparing the coefficients of $y_1^4y_2$ in 
$\phi(R_2),$ 
$$-5a^4c+4a^3e+12a^2bc-3abe-3acd-3b^2c+2de=-5 \lm=-5c^3.$$ 
Using $2e=3ac$ and $a^2=\frac{4}{5}(b+2c),$ we get 
$$\frac{91}{25}b^2+\left(\frac{64}{25}+12\right)bc+\left(\frac{64}{25}+5\right)c^2=0.$$
Thus $91\frac{b^2}{c^2}+364\frac{b}{c}+189=0.$ Since $91x^2+364x+189$ is 
irreducible over $\Q,$ the equation $91x^2+364x+189=0$ has no roots 
in $\Q,$ a contradiction. Hence $c=0.$\\
Let $\phi(R_2)=\lm R_1^\prime.$ Then comparing the coefficients 
of $y_2^3,$ we get $\lm =0.$ Thus $\phi(R_2)=0.$ Hence comparing the coefficients of $y_1^2y_2^2,$ 
we get $e=0.$ Let $\phi(R_3)=\lm_1 y_1R_1^\prime+\lm_2 R_2^\prime.$ Then comparing the coefficients 
of $y_1y_2^3$ and $y_1^3y_2^2,$ we get
\begin{eqnarray}\label{equation1 for lm1 and lm2}
 -\lm_1+4\lm_2&=&0\\
\label{equation2 for lm1 and lm2}
6\lm_1-10\lm_2&=&0. 
\end{eqnarray}
Solving equations (\ref{equation1 for lm1 and lm2}) and (\ref{equation2 for lm1 and lm2}) gives 
$\lm_1=\lm_2=0.$ Thus $\phi(R_3)=0.$ Therefore by Lemma \ref{vanishing of Ri}, $\phi$ is trivial.\\
 $(3)$ Let $\bar\phi:H^*(G_{5,2};\Q) \rightarrow H^*(G_{5,1},\Q)$ be a graded algebra homomorphism. Then 
 $\bar\phi$ is trivial. But there exists a nontrivial map $\bar h:H^*(G_{5,2};\mathbb R) 
 \rightarrow H^*(G_{5,1},\mathbb R).$\\
 {\it Proof:} Let $I=\mathcal I_{5,2},I^\prime=\mathcal I_{5,1}.$ Let $\bar\phi(x_1+I)=ay_1+I^\prime$ and $\bar\phi(x_2+I)=by_1^2+I^\prime.$ Define $\phi:
 \Q[x_1,x_2] \rightarrow \Q[y_1]$ by $\phi(x_1)=ay_1$ and $\phi(x_2)=by_1^2.$ Then $\phi(I)\subset I^\prime$. Since 
 $\deg R_1=8<10$ and $\phi(R_1) \in (y_1^5),$ we get $\phi(R_1)=\phi(x_1^4-3x_1^2x_2+x_2^2)=0.$ Thus 
 $(a^4-3a^2b+b^2)y_1^4=0$ which implies that $a^4-3a^2b+b^2=0.$ If $b=0$ then $a=0$ and hence $\bar \phi $ is trivial. Suppose $b\neq 0.$ Then $\left(\frac{a^2}{b}\right)^2-3\frac{a^2}{b}+1=0.$
 Thus $\frac{a^2}{b}=\frac{3\pm \sqrt{5}}{2} \notin \Q.$ This implies that $\bar \phi $ is trivial. 
 Define $h:\mathbb R[x_1,x_2] \rightarrow 
 \mathbb R[y_1]$ by $h(x_1)=ay_1$ and $h(x_2)=\frac{2a^2}{3\pm \sqrt{5}}y_1^2$. Then $h $ 
 induces a graded algebra homomorphism $\bar h :H^*(G_{5,2};\mathbb R) \rightarrow H^*(G_{5,1},\mathbb R).$ 
 \end{example}

 In section \ref{section4}, we gave examples of nontrivial graded algebra homomorphisms $\bar \phi: H^*(G_{n,k};\Q) \rightarrow H^*(G_{n,1};\Q)$ if $k$ divides $n$ (Example \ref{nontrivial map}). In the next example we show that there do exist infinitely many nontrivial graded algebra homomorphisms $\bar \phi: H^*(G_{n,2};\Q) \rightarrow H^*(G_{n,1};\Q)$ even if $n$ is not even. We need the following proposition for this purpose.

\begin{proposition} \label{rational solutions}
Let $n>1$ be a positive integer. Consider the polynomial  
$$g(c_1,c_2) := \sum_{r+2s=n-1} (-1)^{r+s} {r+s \choose s} c_1^rc_2^s \in
\mathbb{Z}[c_1,c_2],$$
where $c_1$ and $c_2$ are indeterminates.
Then,
\begin{eqnarray*}
&&\{(u,v) \in \mathbb{R}^2 - \{(0,0)\}
: g(u,v) = 0 \}\\
&=&\begin{cases} 
\{(\pm {2 \sqrt{v} \cos (r \pi /n)},v):v>0, r \in \mathbb Z \mbox{ and }e^{2ir \pi/n} \neq \pm{1}\}
\cup\{(0,v):v\neq 0\}& \mbox{ if }2|n\\
\{(\pm {2 \sqrt{v} \cos (r \pi /n)},v):v>0, r \in \mathbb Z \mbox{ and }e^{2ir \pi/n} \neq \pm{1}\}& \mbox{ otherwise}.
\end{cases}
\end{eqnarray*}
Let $A=\{(0,u):u \in \Q-\{0\}\}$, $B=\{(\pm{2u},2u^2):u \in \Q-\{0\} \}$, $C=\{(\pm{3u},3u^2) : u \in \Q-\{0\} \}$, $D=\{(\pm{u},u^2) : u \in \Q-\{0\} \}$. Suppose $(0,0) \neq (u,v) \in \mathbb{Q}^2.$ Then $ g(u,v) = 0$ if and only if 
\begin{equation}
 \label{rational solution of g}
(u,v)\in
\begin{cases}
A & \mbox{ if }n\equiv2,10 \mod 12\\
D & \mbox{ if }n\equiv3,9 \hspace*{0.08in}\mod 12\\
A\cup B & \mbox{ if } n\equiv4,8  \hspace*{0.09in} \mod 12\\
A\cup C\cup D & \mbox{ if } n\equiv6  \hspace*{0.24in}\mod 12\\
A\cup B\cup C\cup D & \mbox{ if }  n\equiv0 \hspace*{0.24in}\mod 12.\\
\end{cases}
\end{equation}
Further, if $n$ is not a multiple of $2$ or $3$, then there are no
non-zero rational solutions of $g(c_1,c_1)$.
\end{proposition}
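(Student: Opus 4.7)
The plan is to identify $g(c_1,c_2)$ as the coefficient of $t^{n-1}$ in the generating function $(1+c_1t+c_2t^2)^{-1}$, obtain a Chebyshev-type closed form, classify the real zeros by the sign of the discriminant $c_1^2-4c_2$, and finally cut down to rational zeros with Niven's theorem on rational values of $\cos(q\pi)$, $q\in\Q$.

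First I would expand $(1+c_1t+c_2t^2)^{-1}=\sum_{k\ge 0}(-1)^k(c_1t+c_2t^2)^k$ and apply the binomial theorem to identify $g(c_1,c_2)$ as the coefficient of $t^{n-1}$ in $(1+c_1t+c_2t^2)^{-1}$. Factoring $1+c_1t+c_2t^2=(1-\alpha t)(1-\beta t)$, so that $\alpha+\beta=-c_1$ and $\alpha\beta=c_2$, partial fractions give
\[
g(c_1,c_2)=\frac{\alpha^n-\beta^n}{\alpha-\beta}\quad(\alpha\ne\beta),\qquad g(c_1,c_2)=n\alpha^{n-1}\quad(\alpha=\beta).
\]
For $(c_1,c_2)\ne(0,0)$ the degenerate case is nonzero, so the zeros of $g$ coincide with the nondegenerate solutions of $(\alpha/\beta)^n=1$.

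Next I would split the real analysis by the sign of $\Delta:=c_1^2-4c_2$. For $\Delta>0$ the ratio $\alpha/\beta$ is real and distinct from $1$, so must equal $-1$; this forces $c_1=0$, $c_2<0$, with $n$ even. For $\Delta<0$ we have $c_2>0$, and writing $\alpha=\sqrt{c_2}\,e^{i\theta}$ with $\cos\theta=-c_1/(2\sqrt{c_2})$ the closed form collapses to $g=c_2^{(n-1)/2}\sin(n\theta)/\sin\theta$. Hence $g=0$ iff $\theta=r\pi/n$ with $\sin\theta\ne 0$, equivalently $c_1=\pm 2\sqrt{c_2}\cos(r\pi/n)$ for $r\in\Z$ with $e^{2ir\pi/n}\ne 1$; the supplementary exclusion $e^{2ir\pi/n}\ne-1$ simply removes the overlap $\cos(r\pi/n)=0$ with the axial family $c_1=0$ when $n$ is even. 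The boundary $\Delta=0$ and the case $c_2=0$, $c_1\ne 0$ both yield $g\ne 0$, so the real classification is complete.

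For the rational part, suppose $(u,v)\in\Q^2\setminus\{(0,0)\}$ lies on the nondegenerate branch; then $v>0$ and $\cos^2(r\pi/n)=u^2/(4v)\in\Q$. Since $\cos(2r\pi/n)=2\cos^2(r\pi/n)-1$ is then rational, Niven's theorem forces $\cos^2(r\pi/n)\in\{0,\,1/4,\,1/2,\,3/4,\,1\}$. The extremes $0$ and $1$ are excluded by $e^{2ir\pi/n}\ne\pm1$; the values $1/4,1/2,3/4$ correspond respectively to $\cos(r\pi/n)=\pm 1/2,\,\pm 1/\sqrt 2,\,\pm\sqrt 3/2$, which admit an integer $r$ exactly when $3\mid n$, $4\mid n$, $6\mid n$, and produce $v=u^2$, $v=u^2/2$, $v=u^2/3$, i.e.\ the sets $D$, $B$, $C$ respectively. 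The axial set $A=\{(0,v):v\ne 0\}$ contributes precisely when $n$ is even. Listing which of the divisibilities $2,3,4,6\mid n$ hold for each residue class of $n$ modulo $12$ then directly produces (\ref{rational solution of g}), and the concluding claim ``$\gcd(n,6)=1\Rightarrow$ no nonzero rational zero'' corresponds to the residues $1,5,7,11\pmod{12}$. The main obstacle is the last step: applying Niven's theorem cleanly and keeping the bookkeeping straight so that the circular parametrization $v>0$ and the axial family $c_1=0$ combine into the disjoint list $A,B,C,D$ without double counting or omission.
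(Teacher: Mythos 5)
Your proposal is correct and follows essentially the same route as the paper: both reduce $g(u,v)=0$ to the condition $x^n=y^n$, $x\neq y$ on the roots of the associated quadratic, parametrize the real solutions via $u^2/v=4\cos^2(r\pi/n)$, and invoke the rationality of $\cos(2r\pi/n)$ (Niven/Olmsted) to extract the rational cases $A,B,C,D$ by divisibility of $n$ by $2,3,4,6$. The only difference is that you obtain the closed form $(\alpha^n-\beta^n)/(\alpha-\beta)$ from the generating function $(1+c_1t+c_2t^2)^{-1}$, whereas the paper quotes the equivalent polynomial identity from Sury's note; your derivation is self-contained but the content is identical.
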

\begin{proof}
By \cite{sury}, we have 
$$\sum_{s \geq 0} (-1)^s {n-1-s \choose s} (c_1+c_2)^{n-1-2s}(c_1c_2)^s
= c_1^{n-1}+c_1^{n-2}c_2+ \cdots + c_2^{n-1}$$ in $\mathbb{Z}[c_1,c_2]$. 
Let $f(c_1,c_2)= \sum_{s \geq 0} (-1)^s {n-1-s \choose s}  (c_1+c_2)^{n-1-2s}(c_1c_2)^s.$ Then for $ u,v, x,y \in \mathbb{C}, $ we have 
$$g(u,v) = \sum_{s \geq 0} (-1)^{n-1-s} {n-1-s \choose s} u^{n-1-2s}v^s
= (-1)^{n-1} f(x,y),$$ where $x+y=u, xy=v.$ Note that $(u,v) \neq (0,0)$ if and only if $(x,y) \neq (0,0)$. Also,
$g(u,v)=0$ if and only if $f(x,y)=0$. Further, $(x,y)\neq(0,0)$ and $f(x,y) =
x^{n-1}+x^{n-2}y+ \cdots + y^{n-1} = 0$ if and only if $x^n=y^n$ 
and $x \neq y$.\\
Let $v>0 $ and $e^{2ir \pi/n} \neq \pm{1}.$ Let $u=\pm2 \sqrt{v} \cos (r \pi /n).$  Since $x+y=u, xy=v$, we have $x,y=\frac{ u\pm\sqrt{u^2-4v}}{2}.$ Thus $(x,y)=\sqrt{v}(e^{ ir \pi/n},e^{-ir \pi/n}),\sqrt{v}(e^{-ir \pi/n},e^{ir \pi/n})$, $-\sqrt{v}(e^{ir \pi/n},e^{-ir\pi/n})$ and $-\sqrt{v}(e^{-ir\pi/n},e^{ir\pi/n}),$ 
which implies that $x^n=y^n$ and $x \neq y.$ Therefore $f(x,y)=0$ and hence $g(u,v)=0.$ Moreover, if $2|n$, then each monomial in $g(c_1,c_2)$ contains $c_1$, so $g(0,c_2)=0.$\\
Let $g(u,v)=0$ for some $(0,0) \neq (u,v)\in \mathbb{R}^2.$ Then $f(x,y)=0.$ Therefore $x = y e^{2ir \pi/n},$ where $e^{2i r \pi/n} \neq
1$. 
Hence, $u = x+y = y(1+ e^{2ir \pi/n}), v = xy = y^2 e^{2ir \pi/n} \in
\mathbb{R}$ with $e^{2ir \pi/n} \neq 1$.\\
Thus
$$\frac{u^2}{v} = \frac{1+ 2e^{2ir \pi/n}+ e^{4ir \pi/n}}{e^{2ir
\pi/n}} = e^{-2ir \pi/n} + 2 + e^{2ir \pi/n} = 4 \cos^2 (r \pi /n).$$
This implies that\\
1) if $\cos(r\pi/n)\neq0$, then $v>0$ and $u = \pm {2 \sqrt{v} \cos (r \pi /n)}$ with
$e^{2ir \pi/n} \neq \pm{1}$ and \\
2) if $\cos(r\pi/n)=0,$ then $2|n$ and $u=0.$

\noindent Let $(0,0) \neq (u,v) \in \Q^2$ be such that $g(u,v)=0$. Then $u=\pm {2 \sqrt{v} \cos (r \pi /n)}$ or $0$. Let $\cos (r \pi/n) = \cos (m \pi/d)$ where $d|n$ and $(m,d)=1$. 
Since $\sqrt{v} \cos (m \pi/d)\in\Q$  with $0 \neq v \in \Q,$ we have $\cos^2 (m \pi/d)\in \Q$ which means that $\cos (2m
\pi/d)\in \Q$. Thus the only possible values for $\cos (2m \pi/d)$ are $0,1,-1,\frac{1}{2},-\frac{1}{2}$ (see \cite{olm}).
These correspond to $\cos (m \pi/d) = \pm{1/\sqrt{2}}, \pm{1},0,
\pm{\frac{\sqrt{3}}{2}}, \pm{\frac{1}{2}}$ which in turn correspond to $\frac{m}{d} = \frac{1}{4},\frac{3}{4},0,1,\frac{1}{2},\frac{1}{6},\frac{5}{6},\frac{1}{3}$ and $\frac{2}{3}$, respectively. We have omitted the case when $e^{2ir\pi/n}=1,$ i.e. when $\cos(m\pi/d)=1,-1$ and $\frac{m}{d}=0,1$. If $\frac{m}{d}=\frac{1}{2},$ then $2|n$ and $\cos(m\pi/d)=0$.
Thus there are rational solutions as required whenever $n$ is a
multiple of $2$ or $4$ or $6$ or $3$. 
\end{proof}
\begin{example}\label{sury}
For fixed $(u,v) \in \Q^2 -\{(0,0)\}$ as in \eqref{rational solution of g}, define 
$\phi:\Q[x_1,x_2] \rightarrow \Q[y]$ as $\phi(x_1)=uy$ and $\phi(x_2)=vy^2.$ 
Then $\phi$ induces a nontrivial graded algebra homomorphism $\bar \phi:H^*(G_{n,2};\Q) \rightarrow H^*(G_{n,1};\Q)$ and this gives the complete set of non-trivial graded algebra homomorphisms. Further, there are no non-trivial maps $\bar\phi:H^*(G_{n,2};\Q)\to H^*(G_{n,1}:\Q)$ if 
$n$ is not divisible by $2$ or $3$.\\
{\it Proof:} We have $R_1=\sum_{r+2s=n-1} (-1)^{r+s} {r+s \choose s} x_1^rx_2^s.$ 
By Proposition \ref{rational solutions}, $\phi(R_1)=0$ and  thus $\phi$ gives a nontrivial graded algebra homomorphism 
$\bar \phi:H^*(G_{n,2};\Q) \rightarrow H^*(G_{n,1};\Q).$ Also, these are the only possible graded algebra homomorphisms.\\
Suppose $n$ is divisible by neither $2$ nor $3$. Let $\phi(x_1)=u_1y,\phi(x_2)=v_1y^2$ for some $u_1,v_1 \in \Q.$ By Proposition \ref{rational solutions}, $g(u_1,v_1) \neq 0.$ Hence
\[\phi(R_1)= \left(\sum_{r+2s=n-1} (-1)^{r+s} {r+s \choose s} c_1^rc_2^s \right)y^{n-1} = g(u_1,v_1) y^{n-1}\neq 0.\]
Therefore there are no non-trivial graded algebra homomorphisms from $H^*(G_{n,2};\Q)$ to $ H^*(G_{n,1};\Q).$
\end{example}

\section*{Acknowledgements}
\noindent We thank Prof. P. Sankaran for suggesting the problem and many useful discussions. We thank Prof. B. Sury for giving proofs of Proposition \ref{rational solutions} and Example \ref{sury}. We also thank the referees for a careful reading of the manuscript and giving suggestions which have improved our manuscript.

\end{document}